\documentclass[12pt]{amsart}
\usepackage[margin=1in]{geometry}
\usepackage{changepage, bm, amsmath, makecell, multirow, cite,
  setspace,amsfonts, graphicx, color}
\usepackage{amsthm}
\usepackage{verbatim}
\usepackage{algorithm}
\usepackage{algorithmic}
\numberwithin{equation}{section}
\usepackage[toc,page]{appendix}
\newtheorem{theorem}{Theorem}[section]
\newtheorem{remark}{Remark}[section]

\newtheorem{lemma}[theorem]{Lemma}
\newtheorem{definition}{Definition}[section]

\begin{document}
\title{Data-driven computation methods for quasi-stationary
  distribution and sensitivity analysis}
\author{Yao Li}
\address{Yao Li: Department of Mathematics and Statistics, University
  of Massachusetts Amherst, Amherst, MA, 01002, USA}
\email{yaoli@math.umass.edu}
\author{Yaping Yuan}
\address{Yaping Yuan: Department of Mathematics and Statistics, University
  of Massachusetts Amherst, Amherst, MA, 01002, USA}
\email{yuan@math.umass.edu}

\thanks{Authors are listed in alphabetical order. Yao Li is partially supported by NSF DMS-1813246.}

\keywords{quasi-stationary distribution, Monte Carlo simulation,
  data-driven computation, coupling method}

\begin{abstract}
  This paper studies computational methods for
  quasi-stationary distributions (QSDs). We first proposed a
  data-driven solver that solves Fokker-Planck equations for
  QSDs. Similar as the case of Fokker-Planck equations for invariant
  probability measures, we set up an optimization problem that
  minimizes the distance from a low-accuracy reference solution, under
  the constraint of satisfying the linear relation given by the
  discretized Fokker-Planck operator. Then we use coupling method to
  study the sensitivity of a QSD against either the change of boundary
  condition or the diffusion coefficient. The 1-Wasserstein distance
  between a QSD and the corresponding invariant probability measure
  can be quantitatively estimated. Some numerical results about both
  computation of QSDs and their sensitivity analysis are provided.
\end{abstract}
\maketitle

\section{Introduction}
Many models in various applications are described by Markov chains
with absorbing states. For example, any models with mass-action
kinetics, such as ecological models, epidemic models, and chemical
reaction models, are subject to the population-level randomness called
the demographic stochasticity, which leads to extinction in finite
time. There are also many dynamical systems that have interesting
short term dynamics but trivial long term dynamics, such as dynamical
systems with transient chaos \cite{lai2011transient}. A common way of capturing asymptotical properties of these
transient dynamics is the quasi-stationary distribution (QSD), which
is the conditional limiting distribution conditioning on not hitting the absorbing
set yet. However, most QSDs do not have a closed form. So numerical
solutions are necessary in various applications.

Computational methods for QSDs are not very well
developed. Although the relation between QSD and the Fokker-Planck
equation is well known, it is
not easy to use classical PDE solver to solve QSDs because of the
following two reasons. First a QSD is the eigenfunction of the Fokker-Planck
operator whose eigenvalue is unknown. The cost of solving
eigenfunction of a discretized Fokker-Planck operator is considerably high. Secondly
the boundary condition of the Fokker-Planck equation is unknown. We
usually have a mixture of unbounded domain and unknown boundary value
at the absorbing set. As a result, Monte Carlo simulations are more
commonly used. However the efficiency of the Monte Carlo simulation is
known to be low. To get the probability density function, one needs to
deal with undesired noise associated to the Monte Carlo
simulation. Methods like the kernel density estimator can smooth the
solution but also introduce undesired diffusions to the solution,
especially when a QSD is highly concentrated at the vicinity of some
low-dimensional sets.

The first goal of this paper is to extend the data-driven
Fokker-Planck solver developed by the first author to the case of
QSDs \cite{dobson2019efficient}. Similar to \cite{dobson2019efficient}, we need a reference
solution $\mathbf{v}$ generated by the Monte Carlo simulation. Then we
discretize the Fokker-Planck operator in a numerical domain $D$
without the boundary condition. Because of the lack of boundary
conditions, the discretization only gives an underdetermined linear
system, denoted by $\mathbf{Au} = 0$. Then we minimize $\| \mathbf{v}
- \mathbf{u} \|$ in the null space of $\mathbf{A}$. As shown in \cite{dobson2021using},
this optimization problem projects the error terms of $\mathbf{v}$ to
a low dimensional linear subspace, which significantly reduces its
norm. Our numerical simulations show that this data-driven Fokker-Planck
solver can tolerate very high level of spatially uncorrelated error,
so the accuracy of $\mathbf{v}$ does not have to be high. The main
difference between QSD solver and the Fokker-Planck solver is that we
need a killing rate to find the QSD, which is obtained by a Monte Carlo
simulation. We find that the QSD is not very sensitive against small
error in the estimation of the killing rate.

The second goal of this paper is to study the sensitivity of
QSDs. Some modifications of either the boundary condition or the model
parameters can prevent the Markov process from hitting the absorbing state in
finite time. So the modified process would admit an invariant
probability measure instead of a QSD. It is important to understand
the difference between the QSD of a Markov process and the invariant
probability measure of its modification. For example, many ecological
models do not consider demographic noise because the population size
is large and the QSD is much harder to study. But would the demographic
noise completely change the asymptotical dynamics? More generally, a QSD
captures the transient dynamics of a stochastic differential
equation. If we separate a domain from the global dynamics by imposing
reflecting boundary condition, how would the local dynamics be
different from the corresponding transient dynamics? All these
require some sensitivity analysis with quantitative bounds.

The way of sensitivity analysis is similar to \cite{dobson2021using}. We need both finite time error
and the rate of contraction of the transition kernel of the Markov
process. The finite time error is given by both the 
killing rate and the change of model parameters (if any). Both cases
can be estimated easily. The rate of contraction is estimated by the
data-driven method proposed in \cite{li2020numerical}. We design a suitable
coupling scheme for the modified Markov process that admits an
invariant probability measure. Because of the coupling inequality, the exponential tail of the coupling
time can be used to estimate the rate of contraction. The sensitivity
analysis is demonstrated by several numerical examples. We can find
that the invariant probability measure of the modified process is a better approximation of
the QSD if (i) the speed of contraction is faster and (ii) the killing
rate is lower.

The organization of this paper is as follows. A short preliminary
about QSD, Fokker-Planck equation, simulation method, and coupling
method is provided in Section 2. Section 3 introduces the data-driven
QSD solver. The sensitivity analysis of QSD is given in Section
4. Section 5 is about numerical examples.

\section{Preliminary}
In this section, we provide some preliminaries to this paper, which
are about the quasi-stationary distribution (QSD), the Fokker-Planck
equation, the coupling method, and numerical simulations of the QSD.

\subsection{Quasi-stationary distribution}
We first give definition of the QSD and the exponential killing
rate $\lambda$ of a Markov process with an absorbing state. Let
$X=(X_t: t\geq 0)$ be a continuous-time Markov process taking values
in a measurable space $(\mathcal{X}, \mathcal{B}( \mathcal{X}))$. Let
$P^{t}(x, \cdot)$ be the transition kernel of $X$ such that
$P^{t}(x, A) = \mathbb{P}[X_{t} \in A \,|\, X_{0} = x]$ for all $A \in
\mathcal{B}$. Now assume that there exists an absorbing set
$\partial\mathcal{X}\subset \mathcal{X}$. The complement 
$\mathcal{X}^{a}:=\mathcal{X}\backslash\partial\mathcal{X}$ is the set
of allowed states.

The process $X_t$ is killed when it hits the
absorbing set, implying that $X_t\in \partial\mathcal{X}$ for all
$t>\tau$, where $\tau = \inf\{t>0: X_t\in\partial\mathcal{X} \}$ is
the hitting time of set $\partial\mathcal{X}$. Throughout this paper, we assume that the
process is almost surely killed in finite time, i.e. $\mathbb{P}[\tau<\infty]=1$.

For the sake of simplicity let $\mathbb{P}_x$
(resp. $\mathbb{P}_{\mu}$) be the probability
conditioning on the initial condition $x\in\mathcal{X}$ (resp. the
initial distribution $\mu$). 
\begin{definition}
A probability measure $\mu$ on $\mathcal{X}^{a}$ is called a
quasi-stationary distribution(QSD) for the Markov process $X_t$ with
an absorbing set
$\partial\mathcal{X}$, if for every measurable set $C\subset
\mathcal{X}^{a}$ 

\begin{equation}
    \mathbb{P}_{\mu}[X_t\in C|\tau>t]=\mu(C), \ t\geq 0,
    \label{1}
\end{equation}
or equivalently, if there is a probability measure $\mu$ exists such that
\begin{equation}
    \lim_{t\rightarrow\infty}\mathbb{P}_{\mu}[X_t\in C|\tau>t]=\mu(C).
    \label{2}
\end{equation}
in which case we also say that $\mu$ is a quasi-limiting distribution.
\end{definition}
\begin{remark}
When $\mu$ satisfies (\ref{2}), it is called a quasi-limiting distribution, or a Yaglom limit \cite{darroch1965quasi} .
\end{remark}

In the analysis of QSD, we are particularly interested in a parameter
$\lambda$, called the killing rate of the Markov
process. If the distribution of the killing time
$\mathbb{P}_x(\tau>t)$ has an exponential tail, then $\lambda$ is the
rate of this exponential tail. The following theorem shows that the
killing time is exponentially distributed when the process starts from a QSD\cite{collet2012quasi}.  

\begin{theorem}
Let $\mu$ be a QSD and when starting from $\mu$, the killing time $T$ is exponentially distributed, that is, 
\begin{equation*}
\exists \ \lambda=\lambda(\mu) \ \text{such that}\  \mathbb{P}_{\mu}[\tau>t]=e^{-\lambda t}, \ \forall t\geq0,
\end{equation*}
where $\lambda$ is called the killing rate of $X$.
\end{theorem}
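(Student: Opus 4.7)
The plan is to exploit the defining property of a QSD together with the strong Markov property to derive the Cauchy-type functional equation $f(t+s)=f(t)f(s)$ for $f(t):=\mathbb{P}_{\mu}[\tau>t]$, and then invoke the standard fact that every measurable (in particular, right-continuous) multiplicative function on $[0,\infty)$ is of the form $e^{-\lambda t}$.

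First I would fix $s,t\geq 0$ and decompose
\begin{equation*}
\mathbb{P}_{\mu}[\tau>t+s]=\mathbb{P}_{\mu}[\tau>t+s,\,\tau>t]=\mathbb{P}_{\mu}[\tau>t+s\mid \tau>t]\,\mathbb{P}_{\mu}[\tau>t],
\end{equation*}
where the first equality uses that $\{\tau>t+s\}\subset\{\tau>t\}$. The key step is to rewrite the conditional probability on the right. By the Markov property,
\begin{equation*}
\mathbb{P}_{\mu}[\tau>t+s\mid \mathcal{F}_{t},\,\tau>t]=\mathbb{P}_{X_{t}}[\tau>s]\quad\text{on }\{\tau>t\},
\end{equation*}
so taking expectation conditional on $\{\tau>t\}$ gives $\mathbb{P}_{\mu}[\tau>t+s\mid \tau>t]=\mathbb{E}_{\mu}\bigl[\mathbb{P}_{X_{t}}[\tau>s]\,\big|\,\tau>t\bigr]$. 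Now I would apply the QSD property \eqref{1}: the conditional law of $X_{t}$ given $\tau>t$ (started from $\mu$) is again $\mu$, hence the above conditional expectation equals $\mathbb{P}_{\mu}[\tau>s]=f(s)$.

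Combining these identities yields $f(t+s)=f(t)f(s)$ for all $s,t\geq 0$, with $f(0)=1$. Since $t\mapsto f(t)$ is nonincreasing (as $\{\tau>t\}$ is decreasing in $t$) and bounded between $0$ and $1$, the classical theory of Cauchy's multiplicative equation gives $f(t)=e^{-\lambda t}$ for some $\lambda\in[0,\infty]$. The standing assumption $\mathbb{P}[\tau<\infty]=1$ (together with $\mu$ being supported on $\mathcal{X}^{a}$, so that $\mathbb{P}_{\mu}[\tau>0]=1$) forces $\lambda\in[0,\infty)$; in fact $\lambda>0$ because otherwise $\mathbb{P}_{\mu}[\tau=\infty]=1$, contradicting almost-sure killing. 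Defining $\lambda=\lambda(\mu)$ by this identity completes the proof.

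The only real subtlety I anticipate is the rigorous handling of the conditioning on the event $\{\tau>t\}$ when computing $\mathbb{E}_{\mu}[\mathbb{P}_{X_{t}}[\tau>s]\mid \tau>t]$: one must verify that conditioning on $\{\tau>t\}$ is compatible with conditioning on $\mathcal{F}_{t}$, which is where the assumption that the absorbing set is a measurable set and $\{\tau>t\}\in\mathcal{F}_{t}$ (so $\tau$ is a stopping time) is used. Everything else reduces to the standard characterization of the exponential distribution via the lack-of-memory property.
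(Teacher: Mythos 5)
The paper states this theorem without proof, simply citing Collet, Mart\'{i}nez, and San Mart\'{i}n \cite{collet2012quasi}, so there is no in-paper argument to compare against. Your proof is correct and is in fact the standard argument found in that reference: the Markov property plus stationarity of $\mu$ under the conditioned evolution yields the lack-of-memory identity $f(t+s)=f(t)f(s)$ for $f(t)=\mathbb{P}_{\mu}[\tau>t]$, and monotonicity of $f$ (serving as the measurability hypothesis) solves Cauchy's multiplicative equation.

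Two small points worth tightening. First, in the Markov-property step the extra conditioning on $\{\tau>t\}$ inside $\mathbb{P}_{\mu}[\tau>t+s\mid\mathcal{F}_t,\tau>t]$ is redundant once one notes $\{\tau>t\}\in\mathcal{F}_t$; the clean statement is $\mathbb{P}_{\mu}[\tau>t+s\mid\mathcal{F}_t]=\mathbb{P}_{X_t}[\tau>s]$ on $\{\tau>t\}$, after which dividing by $\mathbb{P}_{\mu}[\tau>t]$ and invoking the QSD identity $\mathcal{L}(X_t\mid\tau>t)=\mu$ gives $f(s)$. Second, your treatment of the endpoints of $[0,\infty]$ is slightly mis-attributed: the standing assumption $\mathbb{P}[\tau<\infty]=1$ rules out $\lambda=0$ (otherwise $f\equiv1$ and $\tau=\infty$ a.s.), while it is right-continuity of $f$ at $0$ together with $f(0+)=\mathbb{P}_{\mu}[\tau>0]=1$ (since $\mu$ charges only allowed states) that rules out $\lambda=\infty$ (otherwise $f(t)=0$ for all $t>0$, and the functional equation would force $f(t_0/n)=0$ for all $n$, contradicting $f(0+)=1$). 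With these adjustments the argument is complete and rigorous.
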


Throughout this paper, we assume that $X$ admits a QSD denoted by
$\mu$ with a strictly positive killing rate $\lambda$. 

\subsection{Fokker-Planck equation}
Consider a stochastic differential equation
\begin{equation}
    \text{d}X_t=f(X_t)\text{d}t + \sigma(X_t)\text{d}W_t ,
\label{3}
\end{equation}
where $X_t\in\mathbb{R}^d$ and $X_t$ is killed when it hits the absorbing set $\partial\mathcal{X}\subset\mathbb{R}^n$; 
$f: \mathbb{R}^d\rightarrow\mathbb{R}^d$ is a continuous vector field; $\sigma$ is an $d\times d$ matrix-valued function; and $\text{d}W_t$ is the white noise in $\mathbb{R}^d$. The following well known theorem shows the existence and the uniqueness of the solution of equation \eqref{3}.
\begin{theorem}
Assume that there are two positive constants $C_1$ and $C_2$ such that the two functions $f$ and $\sigma$ in (\ref{3}) satisfy\\
(1) (Lipschitz condition) for all $x, y\in\mathbb{R}^d$ and $t$ 
\begin{equation*}
    |f(x)-f(y)|^2+|\sigma(x)-\sigma(y)|^2\leq C_1|x-y|^2;
\end{equation*}
(2) (Linear growth condition) for all $x, y\in\mathbb{R}^d$ and $t$
\begin{equation*}
    |f(x)|^2 + |\sigma(x)|^2 \leq C_2(1+|x|^2).
\end{equation*}
Then there exists a unique solution $X(t)$ to equation (\ref{3}).
\end{theorem}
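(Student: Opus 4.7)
The plan is to follow the classical Picard iteration argument, adapted from the deterministic ODE setting to the stochastic setting by replacing the triangle inequality bounds with $L^2$ estimates that rely on the It\^o isometry and Doob's martingale inequality. First I would fix an arbitrary $T>0$ and work on the Banach space of $\mathbb{R}^d$-valued continuous adapted processes on $[0,T]$ equipped with the norm $\|Y\|_T^2 = \mathbb{E}[\sup_{0\le t\le T}|Y_t|^2]$, so that it suffices to build a solution on each such $[0,T]$.

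Next I would define the Picard iterates $X^{(0)}_t \equiv X_0$ and
\begin{equation*}
X^{(n+1)}_t = X_0 + \int_0^t f(X^{(n)}_s)\,\mathrm{d}s + \int_0^t \sigma(X^{(n)}_s)\,\mathrm{d}W_s,
\end{equation*}
assuming $\mathbb{E}|X_0|^2<\infty$. The linear growth condition together with Cauchy--Schwarz for the drift integral and It\^o's isometry for the diffusion integral shows inductively that each $X^{(n)}$ lies in the above Banach space, with a uniform-in-$n$ bound $\sup_n \|X^{(n)}\|_T^2 < \infty$ obtained by a Gronwall argument on $\mathbb{E}|X^{(n)}_t|^2$.

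I would then prove that $(X^{(n)})$ is Cauchy. Subtracting two consecutive iterates and using the Lipschitz hypothesis, Cauchy--Schwarz on the Lebesgue integral, and Doob's $L^2$-inequality combined with It\^o isometry on the stochastic integral, I would obtain an inequality of the form
\begin{equation*}
\mathbb{E}\Big[\sup_{0\le s\le t}\bigl|X^{(n+1)}_s-X^{(n)}_s\bigr|^2\Big] \le K\int_0^t \mathbb{E}\bigl|X^{(n)}_s-X^{(n-1)}_s\bigr|^2\,\mathrm{d}s,
\end{equation*}
for some constant $K=K(C_1,T)$. Iterating gives the bound $(KT)^n/n!$ times a constant, which is summable, so $(X^{(n)})$ converges in $\|\cdot\|_T$ to a continuous adapted limit $X$. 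Passing to the limit in the Picard equation (using the Lipschitz bound once more to justify convergence of the drift and stochastic integrals in $L^2$) shows that $X$ solves \eqref{3} on $[0,T]$. Uniqueness is then obtained by the same kind of estimate applied to the difference of two solutions and a direct application of Gronwall's inequality, forcing the $L^2$-norm of the difference to vanish. Finally, uniqueness on each $[0,T]$ patches together to a unique solution on $[0,\infty)$.

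The main technical obstacle is not the algebra of the iteration but the careful handling of the stochastic integral: one needs Doob's inequality to upgrade pointwise $L^2$ control of $\int_0^t \sigma(X^{(n)}_s)\,\mathrm{d}W_s$ into uniform-in-$t$ control, since without the supremum inside the expectation the Cauchy argument only gives convergence in probability at each $t$ rather than uniform convergence needed for continuity of the limit. Once that step is in place, the rest of the argument is a standard Gronwall-style bookkeeping exercise.
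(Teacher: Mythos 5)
The paper does not actually prove this theorem: it is stated as a classical background result in the preliminaries and the reader is referred to standard references (\cite{oksendal2003stochastic,karatzas2014brownian}) for the proof. Your Picard-iteration sketch is correct and is essentially the proof given in those references: one works in the Banach space $L^2(\Omega;C([0,T];\mathbb{R}^d))$, uses the linear-growth condition plus Gronwall to get uniform $L^2$ bounds on the iterates, uses the Lipschitz condition with Cauchy--Schwarz, It\^o isometry, and Doob's maximal inequality to get the $(KT)^n/n!$ Cauchy estimate, passes to the limit, and establishes uniqueness by Gronwall. You have correctly identified the one non-automatic point, namely that Doob's inequality is needed to move the supremum inside the expectation so the limit is a genuine continuous process, not merely a pointwise-in-$t$ $L^2$ limit. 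One small remark: the statement as written in the paper does not explicitly impose $\mathbb{E}|X_0|^2<\infty$, which your argument (and the textbook proofs) requires; you are right to flag it as an assumption, and it is implicit in the references the paper cites.
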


There are quite a few recent results about the existence and
convergence of QSD. Since the theme of this paper is numerical
computations, in this paper we directly assume that  $X_t$ admits a unique QSD $\mu$ on set
$\mathcal{X}^{a}$ that is also the quasi-limit distribution. The detailed conditions are referred
in \cite{oksendal2003stochastic,karatzas2014brownian,van1991quasi,ferrari1992existence,van1995geomatric}.

Let $u$ be the probability density function of $\mu$. We refer \cite{anderson2012continuous} for the fact that $u$ satisfies
\begin{equation}
    -\lambda u = \mathcal{L}u = -\sum^d_{i=1}(f_i u)_{x_i} + \frac{1}{2}\sum^d_{i,j=1} (D_{i j} u)_{x_i x_j},
\label{4}
\end{equation}
where $D=\sigma^{T}\sigma$, and $\lambda$ is the killing rate. 
\subsection{Simulation algorithm of QSD}

It remains to review the simulation algorithm for QSDs. In order to
compute the QSD, one needs to numerically simulate a long trajectory
of $X$. Once $X_{t}$ hits the absorbing state, a new initial value is
sampled from the empirical QSD. The re-sampling step can be done in
two different ways. We can either use many independent trajectories that form
an empirical distribution \cite{barton1993uniform} or re-sample from the history
of a long trajectory \cite{robert2013monte}. In this paper we use the latter
approach.

Let $\hat{X}^\delta=\{\hat{X}^\delta_n,\ n\in\mathbb{Z}_{+}\}$ be a long numerical trajectory of the time-$\delta$ sample chain of $X_t$, where $\hat{X}^\delta_n$ is an numerical approximation of $X_{n\delta}$, then the Euler-Maruyama numerical scheme is given by
\begin{equation}
\hat{X}^\delta_{n+1} =\hat{X}^\delta_{n}+f(\hat{X}^\delta_{n})\delta+\sigma(\hat{X}^\delta_{n})(W_{(n+1)\delta}-W_{n\delta}), 
\label{euler}
\end{equation}
where $\hat{X}^\delta_{n}=X_0$, $W_{(n+1)\delta}-W_{n\delta}\sim
\mathcal{N}(0,\delta \mathrm{Id}_{d}),\ n\in\mathbb{Z}_{+}$ is a
$d$-dimensional normal random variable. 

Another widely used numerical scheme is called the Milstein scheme, which reads
\begin{equation*}
   \hat{X}^\delta_t=\hat{X}^\delta_{n}+f(\hat{X}^\delta_{n})(t-n\delta)+\sigma(\hat{X}^\delta_{n})(W_{t}-W_{n\delta})+
   \sigma(\hat{X}^\delta_{n})I L,
\end{equation*}
where $I$ is a $d\times d$ matrix with its $(i, j)$-th component being the double It$\hat{o}$ integral
\begin{equation*}
    I_{i,j}=\int^t_{n\delta}\int^s_{n\delta}dW^i(s_1)dW^j(s_2),
\end{equation*}
and $L\in\mathbb{R}^d$ is a vector of left operators with $i$-th component
\begin{equation*}
    u\mathcal{L}_i=\sum^d_{i=1}\sigma_{i,j}(\hat{X}^\delta_{n})\frac{\partial
    u}{\partial x_i}.
\end{equation*}

\begin{remark}
Under suitable assumptions of Lipschitz continuity and linear growth
conditions for  $f$ and $\sigma$,
the Euler-Maruyama approximation provides a convergence rate of order
1/2, while the Milstein scheme is an order 1 strong
approximation\cite{kloeden1992stochastic}. 
\end{remark}

For simplicity, we introduce the algorithm for $n=2$, specifically, we
solve $u$ in equation (\ref{4}) numerically on a 2D domain $D=[a_{0},
b_{0}]\times[a_1, b_1]$. Firstly, we construct an $N\times M$ grid on
$D$ with grid size
$h=\frac{b_{0}-a_{0}}{N}=\frac{b_1-a_1}{M}$. Each small box in the mesh
is denoted by $O_{i,j}=[a_{0}+(i-1)h, a_{0}+ih]\times[a_{1}+(j-1)h,
a_{1}+jh]$. Let $\mathbf{u} = \{u_{i,j}\}^{i=N,j=M}_{i=1,j=1}$ be the
numerical solution on $D$ that we are interested in, then $\mathbf{u}$
can be considered as a vector in $\mathbb{R}^{N\times M}$. Each
element $u_{i,j}$ approximates the density function $u$ at
the center of each $O_{i,j}$, with coordinate $(ih+a_{0}-h/2, jh+a_{1}-h/2)$. Generally
speaking, we count the number of $\hat{X}^\delta$ falling into each box
and set the normalized value as the approximate probability density at $O_{i,j}$. As we
are interested in the QSD, the main difference from the general Monte
Carlo is that the Markov process will be regenerated as the way of its
empirical distribution uniformly once it is killed. The details of
simulation is shown in Algorithm 1 as following. 

\begin{algorithm}
  \caption{Monte Carlo for QSD}
  \label{Alg:QSD}

  \begin{algorithmic}[l]
    \REQUIRE Equation \eqref{euler}
and the grid 
\ENSURE A Monte Carlo approximation $\mathbf{u}=\{u_{i,j}\}$. Sample
size $N_{s}$.
\STATE Pick any initial value $X_0\notin\partial\mathcal{X}$ in $D$
\FOR{$ n = 1\  \text{to}\  N_{s}$}
\STATE Use $\hat{X}^{\delta}_{n}$ and equation \eqref{euler} to compute $\hat{X}^\delta_{n+1}$
\STATE Record the coordinates of the small box $O_{i,j}$ where $\hat{X}^\delta_{n+1}$ stands, say $i^*, j^*$
\IF{$\hat{X}^\delta_{n+1}\notin\partial\mathcal{X}$}
\STATE $u_{i^*, j^*}\leftarrow u_{i^*, j^*}+1$
\ELSE
\STATE  $\hat{X}^\delta_{n+1}=\hat{X}^\delta_{\lfloor U*n \rfloor}$, \ where $U$ is a uniformly distributed random variable
\ENDIF
\ENDFOR
\STATE Return $u_{i,j}/N_{s} h^2$ for all $i, j$ as the approximation solution.
\end{algorithmic}
\end{algorithm}

Sometimes the Euler-Maruyama method underestimates the probability
that $X$ moves to the absorbing set, especially when $X_t$ is close to
$\partial\mathcal{X}$. This problem can be fixed by introducing the
Brownian bridge correction. We refer to \cite{benaim2018stochastic} for 
details. For a sample falling into a small box which are closed to
$\partial\mathcal{X}$, the probability of them falling into the trap
$\partial\mathcal{X}$ is relatively high. In fact, this probability is
exponentially distributed and the rate is related to the distance from
$\partial\mathcal{X}$. Let $B^T_B=W_t-\frac{t}{T}W_T$ be the Brownian
Bridge on the interval $[0,T]$. In the 1D case, the law of the infimum
and the supremum of the Brownian Bridge can be computed as follows:
for every $z\geq\max(x,y)$ 
\begin{equation}
    \mathbb{P}[\sup_{t\in[0, T]}(x+(y-x)\frac{t}{T}+\phi B^T_B)\leq z] = 1-\exp(-\frac{2}{T\phi^2}(z-x)(z-y)),
    \label{7}
\end{equation}
where $x=\hat{X}^\delta_{n}\in\mathcal{X}^a,
y=\hat{X}^\delta_{n+1}\in\mathcal{X}^a$, and $\phi = \sigma(\hat{X}^{\delta}_{n})$ is the strength
coefficient of Brownian Bridge.  This means that at each
step $n$, if $\hat{X}^\delta_{n+1}\in\mathcal{X}^a$, one can compute,
with the help of the above properties, a Bernoulli random variable $G$
with the parameter 
\begin{equation*}
    p=\mathbb{P}[\exists t\in(n\delta, (n+1)\delta),
    \hat{X_t}\in\partial\mathcal{X}|x=\hat{X}^\delta_{n},
    y=\hat{X}^\delta_{n+1}]\ \ (\text{If}\  G=1, \text{the process is killed}).
\end{equation*}

\subsection{Coupling Method}
The coupling method is used for the sensitivity analysis of QSDs. 
\begin{definition}
\textbf{(Coupling of probability measures)}
Let $\mu$ and $\nu$ be two probability measures on a probability space $(\Omega, \mathcal{F})$. A probability measure $\gamma$ on $(\Omega\times\Omega, \mathcal{F}\times\mathcal{F})$ is called a coupling of $\mu$ and $\nu$, if two marginals of $\gamma$ coincide with $\mu$ and $\nu$ respectively. 
\end{definition}
The definition of coupling can be extended to any two random variables
that take value in the same state space. Now consider two Markov processes
$X=(X_t:t\geq 0)$ and $Y=(Y_t: t\geq 0)$ with the same transition
kernel $P$. A coupling of $X$ and $Y$ is a stochastic process
$(\widetilde{X}, \widetilde{Y})$ on the product state space $\mathcal{X}\times
\mathcal{X}$ such that 

(i) The marginal processes $\widetilde{X}$ and $\widetilde{Y}$ are Markov processes with the transition kernel $P$;

(ii) If $\widetilde{X_s}=\widetilde{Y_s}$, we have $\widetilde{X_t}=\widetilde{Y_t}$ for all $t>s$.

The first meeting time of $X_t$ and $Y_t$ is denoted as $\tau^C:=\inf_{t\geq 0}\{X_t=Y_t\}$, which is called the coupling time. The coupling $(\widetilde{X}, \widetilde{Y})$ is said to be successful if the coupling time is almost surely finite, i.e. $\mathbb{P}[\tau^C<\infty]=1$. 

In order to give an estimate of the sensitivity of the QSD, we need
the following two metrics.
\begin{definition}(Wasserstein distance)
Let $d$ be a metric on the state space $V$. For probability measures $\mu$ and $\nu$ on $V$, the Wasserstein distance between $\mu$ and $\nu$ for $d$ is given by
\begin{equation}
\begin{split}
    d_w(\mu, \nu) &= \inf\{\mathbb{E}_{\gamma}[d(x, y)]:\ \gamma\  \text{is a coupling of } \mu\  and \ \nu.\}\\
    &=\inf\{\int d(x, y)\gamma(dx, dy): \gamma \  \text{is a coupling of } \mu\  and \ \nu. \}
\end{split}
\label{9}
\end{equation}
\end{definition}

In this paper, without further specification, we assume that the
1-Wasserstein distance is induced by $d(x,y)= \max\{1, \|x-y\|\}.$
\begin{definition}(Total variation distance)
Let $\mu$ and $\nu$ be probability measures on $(\Omega, \mathcal{F})$. The total variation distance of $\mu$ and $\nu$ is
\begin{equation*}
    \|\mu-\nu\|_{TV} := \sup_{C\in\mathcal{F}}|\mu(C)-\nu(C)|.
\end{equation*}
\end{definition}

\begin{lemma}(Coupling inequality)
For the coupling given above and the Wasserstein distance induced by the distance given in (\ref{9}), we have
\begin{equation*}
\mathbb{P}[\tau^C>t]=\mathbb{P}[X^t\neq Y^t]\geq d_w(P^t(x,\cdot), P^t(y,\cdot)).  \end{equation*}
\end{lemma}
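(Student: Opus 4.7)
The plan is to reduce the coupling inequality to two observations: first, that the joint law of $(\widetilde{X}_t, \widetilde{Y}_t)$ at any time $t$ is admissible as a coupling of the marginals $P^t(x,\cdot)$ and $P^t(y,\cdot)$; and second, that the chosen metric $d$ is bounded by the indicator $\mathbf{1}_{u\neq v}$, so that the coupling cost at time $t$ is controlled by $\mathbb{P}[\widetilde{X}_t\neq \widetilde{Y}_t]$.

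First I would establish the equality $\mathbb{P}[\tau^C>t]=\mathbb{P}[\widetilde{X}_t\neq\widetilde{Y}_t]$. The inclusion $\{\tau^C>t\}\subseteq\{\widetilde{X}_t\neq\widetilde{Y}_t\}$ is immediate from the definition $\tau^C=\inf\{s\geq 0:\widetilde{X}_s=\widetilde{Y}_s\}$. The reverse inclusion is the stickiness property (ii) in the coupling definition: once the two components agree at some time $s\leq t$, they remain equal for all later times, so $\widetilde{X}_t\neq\widetilde{Y}_t$ forces $\tau^C>t$.

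Next I would invoke property (i) of the coupling. Because each marginal $\widetilde{X}$ and $\widetilde{Y}$ is Markov with kernel $P$ started from $x$ and $y$ respectively, the law of the pair $(\widetilde{X}_t,\widetilde{Y}_t)$ is an admissible coupling of $P^t(x,\cdot)$ and $P^t(y,\cdot)$. The variational definition of the 1-Wasserstein distance then yields
\[
d_w\bigl(P^t(x,\cdot),P^t(y,\cdot)\bigr)\leq \mathbb{E}\bigl[d(\widetilde{X}_t,\widetilde{Y}_t)\bigr].
\]
Finally, because $d$ is bounded by $1$ and vanishes on the diagonal, we have the pointwise bound $d(u,v)\leq\mathbf{1}_{u\neq v}$; taking expectations gives
\[
\mathbb{E}\bigl[d(\widetilde{X}_t,\widetilde{Y}_t)\bigr]\leq \mathbb{P}[\widetilde{X}_t\neq\widetilde{Y}_t]=\mathbb{P}[\tau^C>t],
\]
which chains with the previous inequality to yield the claim.

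There is no serious obstacle in this argument; the substantive content is entirely packed into the two defining properties of a Markov coupling. The only point worth double-checking is that the truncated metric used to define $d_w$ does satisfy $d\leq \mathbf{1}_{\neq}$, which is what makes the Wasserstein bound here essentially as strong as the total-variation version of the coupling inequality while still being the object that will appear in the contraction-rate estimates used later in Section 4.
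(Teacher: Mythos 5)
Your proof is correct and takes essentially the same route as the paper's: view the joint law of $(\widetilde{X}_t,\widetilde{Y}_t)$ as an admissible coupling, invoke the variational definition of $d_w$, and reduce the coupling cost to $\mathbb{P}[\widetilde{X}_t\neq\widetilde{Y}_t]$ via the pointwise bound $d\leq\mathbf{1}_{\{u\neq v\}}$ (i.e., $d$ vanishes on the diagonal and is capped at $1$). You also spell out the equality $\mathbb{P}[\tau^C>t]=\mathbb{P}[\widetilde{X}_t\neq\widetilde{Y}_t]$ using the stickiness property of the Markov coupling, a step the paper leaves implicit, and correctly read the truncated metric as $\min\{1,\|x-y\|\}$ despite the paper's apparent typo ($\max$ instead of $\min$).
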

\label{l1}
\begin{proof}
By the definition of Wasserstein distance,
\begin{equation*}
\begin{split}
    d_w(P^t(x,\cdot), P^t(y,\cdot))&\leq \int d(x,y)\mathbb{P}[(\widetilde{X^t},\widetilde{Y^t})\in (dx,dy)]\\
    &=\int_{x\neq y}d(x,y)\mathbb{P}[(\widetilde{X^t},\widetilde{Y^t})\in (dx,dy)]\\
    &\leq \int_{x\neq y}\mathbb{P}[](\widetilde{X^t},\widetilde{Y^t})\in (dx,dy)]\\
    &= \mathbb{P}[\widetilde{X^t}\neq\widetilde{Y^t}].
\end{split}
\end{equation*}
\end{proof}

Consider a Markov coupling $(\widetilde{X},
\widetilde{Y})$ where $\widetilde{X}_{t}$ and
$\widetilde{Y}_{t}$ are two numerical trajectories of the stochastic
differential equation described in \eqref{euler}. Theoretically, there are many ways to
make stochastic differential equations couple. But since numerical computation always has
errors, two numerical trajectories may miss each other when the true
trajectories couple. Hence we need to apply a mixture of the following
coupling methods in practice.

\textbf{Independent coupling.} Independent coupling means the noise
term in the two marginal processes $X_t$ and $Y_t$ are independent
when running the coupling process $(\widetilde{X},
\widetilde{Y})$. That is 
\begin{equation*}
\begin{split}
   X^\delta_{n+1} &= f(X^\delta_n) + (W^{(1)}_{(n+1)\delta}- W^{(1)}_{n\delta}) \\
   Y^\delta_{n+1} &= f(Y^\delta_n) + (W^{(2)}_{(n+1)\delta}- W^{(2)}_{n\delta}),
\end{split}
\end{equation*}
where $(W^{(1)}_{(n+1)\delta}- W^{(1)}_{n\delta})$ and $(W^{(2)}_{(n+1)\delta}- W^{(2)}_{n\delta})$ are independent random variables for each $n$. 

\textbf{Reflection coupling} Two Wiener processes meet less often than
the 1D case when the state space has higher dimensions. This fact
makes the independent coupling less effective. The reflection coupling
is introduced to avoid this case. Take the Euler-Maruyama scheme of
the SDE 
\begin{equation*}
    dX_t=f(X_t)dt + \sigma dW_t
\end{equation*}
as an example, where $\sigma$ is a constant matrix. The Euler-Maruyama scheme of $X_t$ reads as
\begin{equation*}
    \hat{X}^\delta_{n+1}=\hat{X}^\delta_n+f(\hat{X}^\delta_n)\delta+\sigma(W_{(n+1)\delta}-W_{n\delta}),
\end{equation*}
where $W$ is a standard Wiener process. The reflection coupling means that we run $\hat{X}^\delta_n$ as
\begin{equation*}
    \hat{X}^\delta_{n+1}=\hat{X}^\delta_n+f(\hat{X}^\delta_n)\delta+\sigma(W_{(n+1)\delta}-W_{n\delta}),
\end{equation*}
while run $\hat{Y}^\delta_n$ as
\begin{equation*}
  \hat{Y}^\delta_{n+1}=\hat{Y}^\delta_n+f(\hat{Y}^\delta_n)\delta+\sigma P (W_{(n+1)\delta}-W_{n\delta}),  
\end{equation*}
where $P=I-2e_ne^T_n$ is a projection matrix with
\begin{equation*}
    e_n=\frac{\sigma^{-1}(\hat{X}^\delta_n-\hat{Y}^\delta_n)}{\|\sigma^{-1}(\hat{X}^\delta_n-\hat{Y}^\delta_n)\|}.
\end{equation*}
Nontechnically, reflecting coupling means that the noise term is reflected against the hyperplane that
orthogonally passes the midpoint of the line segment connecting
$\hat{X}^\delta_n$ and $\hat{Y}^\delta_n$. In particular, $e_n=-1$
when the state space is 1D. 

\textbf{Maximal coupling} Above coupling schemes can bring $\hat{X}^\delta_n$
moves close to $\hat{Y}^\delta_n$ when running numerical simulations.
However, a mechanism is required to make
$\hat{X}^\delta_{n+1}=\hat{Y}^\delta_{n+1}$ with certain positive
probability. That's why the maximal coupling is involved. One
can couple two trajectories whenever the probability distributions of
their next step have enough overlap. Denote $p^{(x)}(x)$ and
$p^{(y)}(x)$ as the probability density functions of $\hat{X}_{n+1}$
and $\hat{Y}_{n+1}$ respectively. The implementation of the maximal coupling is
described in the following algorithm.

\begin{algorithm}
\caption{Maximal coupling}
\begin{algorithmic}[l]
  \REQUIRE $\hat{X}^\delta_n$ and $\hat{Y}^\delta_n$ \\
\ENSURE $\hat{X}^\delta_{n+1}$ and $\hat{Y}^\delta_{n+1}$, and $\tau^C$ if coupled
\STATE Compute probability density functions $p^{(x)}(z)$ and $p^{(y)}(z)$
\STATE Sample $\hat{X}^\delta_{n+1}$ and calculate $r=Up^{(x)}(\hat{X}^\delta_{n+1})$, where $U$ is uniformly distributed on [0,1]
\IF{$r<p^{(y)}(\hat{X}^\delta_{n+1})$}
\STATE $ \hat{Y}^\delta_{n+1}=\hat{X}^\delta_{n+1}, \tau^C=(n+1)\delta$
\ELSE
\STATE Sample $\hat{Y}^\delta_{n+1}$ and calculate $r'=V p^{(y)}(\hat{Y}^\delta_{n+1})$, where $V$ is uniformly distributed on [0,1]
\WHILE{$r'< p^{(x)}(\hat{Y}^\delta_{n+1})$}
\STATE Resample $\hat{Y}^\delta_{n+1}$ and $V$. Recalculate $r'=V p^{(y)}(\hat{Y}^\delta_{n+1})$
\ENDWHILE
\STATE $\tau^C$ is still undetermined
\ENDIF
\end{algorithmic}
\end{algorithm}

For discrete-time numerical schemes of SDEs, we use reflection coupling when
$\hat{X}^\delta_n$ and $\hat{Y}^\delta_n$ are far away from each
other, and maximal coupling when they are sufficiently close. The
threshold of changing coupling method is $2\sqrt{\delta}\|\sigma\|$ in
our simulation, that is, the maximal coupling is applied when the
distance between $\hat{X}^\delta_n$ and $\hat{Y}^\delta_n$ is smaller
than the threshold.

\section{Data-driven solver for QSD}
Recall that the probability density
function $u$ of QSD solves the Fokker-Planck equation $-\lambda u =
\mathcal{L}u$. The QSD solver consists of three components: an estimator
of the killing rate $\lambda$, a Monte Carlo simulator of QSD that produces a reference solution,
and an optimization problem similar as in \cite{li2018data}.

\subsection{Estimation of $\lambda$}
Let $\hat{X}^\delta=\{\hat{X}^\delta_n,\ n\in\mathbb{Z}_{+}\}$ be a
long numerical trajectory of  $X_t$ as described in Algorithm
\ref{Alg:QSD}. Let ${\bm \tau}=\{\tau_m\}^M_{m=0}$
be recordings of killing times of the numerical trajectory such that
$X_{t}$ hits $\partial\mathcal{X}$ at $\tau_{0}, \tau_{0}+\tau_{1},
\tau_{0}+\tau_{1}+\tau_{2}, \cdots$ when running Algorithm \ref{Alg:QSD}. Note that
${\bm \tau}$ is an 1D vector and each element in ${\bm \tau}$ is a
sample of the killing time. It is well known
that if the QSD $\mu$ exists for a Markov process, then there exists a constant
$\lambda > 0$ such that
$$
  \mathbb{P}_{\mu}[\tau > t] = e^{-\lambda t} \,.
$$
Therefore, we can simply assume that the killing times
${\bm \tau}$ be exponentially distributed and the rate can be approximated
by 
\begin{equation*}
\lambda=\frac{1}{\text{mean of}\  {\bm \tau}}. 
\end{equation*}

One pitfall of the previous approach is that Algorithm \ref{Alg:QSD}
only gives a QSD when the time approaches to infinity. It is
possible that ${\bm \tau}$ has not converged close enough to the
desired exponential distribution. So it remains
to check whether the limit is achieved. Our approach is to check the
exponential tail in a log-linear plot. After having ${\bm \tau}$, it is
easy to choose a sequence of times $t_{0}, t_{1}, \cdots, t_{n}$ and
calculate $n_{i} = | \{ \tau_{m} > t_{i} \,|\, 0 \leq m \leq M\} |$
for each $i = 0, \cdots n$. Then $p_{i} = n_{i}/M$ is an
estimator of $\mathbb{P}_{\mu}[ \tau > t_{i}]$. Now let $p_{i}^{u}$
(resp. $p_{i}^{l}$) be the upper (resp. lower) bound of the confidence
interval of $p_{i}$ such that
$$
  p_{i}^{u} = \tilde{p} + z \sqrt{\frac{ \tilde{p}}{\tilde{n}_{i}}(1 -
  \tilde{p})} \quad (\mbox{ resp. } p_{i}^{l} = \tilde{p} - z \sqrt{\frac{ \tilde{p}}{\tilde{n}_{i}}(1 -
  \tilde{p})} )\,,
$$
where $z = 1.96$, $\tilde{n}_{i} = n_{i} + z^{2}$ and $\tilde{p}=
\frac{1}{\tilde{n}}(n_i+\frac{z^2}{2})$ \cite{agresti1998approximate}. If $p_{i}^{l}
\leq e^{-\lambda t_{i}} \leq p_{i}^{u}$ for each $0 \leq i \leq n$, we
accept the estimate $\lambda$. Otherwise we need to run Algorithm
\ref{Alg:QSD} for longer time to eliminate the initial bias in ${\bm
  \tau}$.

\subsection{Data driven QSD solver.} The data driven solver for the
Fokker-Planck equation introduced in \cite{li2018data} can be modified
to solve the QSD for the stochastic differential equation
\eqref{3}. We use the same 2D setting in Section 2.3 to introduce the
algorithm. Let the domain $D$ and the boxes $\{ O_{i,j}\}_{i = 1, j
  =1}^{i = N, j = M}$ be the same as defined in Section 2.3. Let
$\textbf{u}$ be a vector in $\mathbb{R}^{N\times M}$ such that
$u_{ij}$ approximates the probability density function at the center
of the box $O_{i,j}$. As introduced in \cite{dobson2019efficient}, we consider $\textbf{u}$ as the
solution to the following linear system given by the spatial discretization of
the Fokker-Planck equation \eqref{4} with respect to each center point: 
\begin{equation}
    \mathbf{A_0u}=\lambda \mathbf{u},
    \label{5}
\end{equation}
where $\mathbf{A_0}$ is an $(N-2)(M-2)\times (NM)$ matrix, which is
called the discretized Fokker-Planck operator, and $\lambda$ is the
killing rate, which can be obtained by the way we mentioned in
previous subsection. More precisely, each row in $\mathbf{A_{0}}$
describes the finite difference scheme of equation \eqref{4} with
respect to a non-boundary point in the domain $D$. 

Similar to \cite{li2018data}, we need the Monte Carlo simulation
to produce a reference solution $\textbf{v}$, which can be obtained
via \textbf{Algorithm \ref{Alg:QSD}} in Section 2. Let
$\hat{X}^\delta=\{\hat{X}^\delta_{n}\}^N_{n=1}$ be a long numerical
trajectory of time -$\delta$ sample chain of process $X_t$ produced by
\textbf{Algorithm \ref{Alg:QSD}}, and let
$\textbf{v} = \{v_{i,j}\}^{i=N, j=M}_{i=1,j=1}$ such that 
\begin{equation*}
    v_{i,j}=\frac{1}{Nh^2}\sum^N_{n=1}\textbf{1}_{O_{i,j}}(\hat{X}^\delta_n)
\end{equation*}
It follows from the ergodicity of (\ref{3}) that $\textbf{v}$ is an approximate
solution of equation (\ref{4}) when the trajectory is sufficiently
long. However, as discussed in \cite{li2018data}, the trajectory
needs to be extremely long to make $\textbf{v}$ accurate
enough. Noting that the error term of $\textbf{v}$ has little spatial
correlation, we use the following optimization problem to improve the
accuracy of the solution. 
\begin{equation}
\begin{split}
\min\  &\Arrowvert \mathbf{u-v} \Arrowvert^2 \\
\text{subject to}\  & \mathbf{A_0u}=\lambda \mathbf{u}. 
\end{split}
\label{opt}
\end{equation}
The solution to the optimization problem \eqref{opt} is called the
least norm solution, which satisfies
$\mathbf{u}=\mathbf{v}-\mathbf{A^T(AA^T)^{-1}(Av)}$, within
$\mathbf{A=A_0}-\lambda \mathbf{I}$. \cite{li2018data}

An important method called the block data-driven solver is introduced
in \cite{dobson2019efficient}, in order to reduce the scale of numerical linear algebra
problem in high dimensional problems. By dividing domain $D$ into
$K\times L$ blocks $\{D_{k,l}\}^{k=1,l=1}_{k=K,l=L}$ and discretizing
the Fokker-Planck equation, the linear constraint on $D_{k,l}$ is 
\begin{equation*}
    \mathbf{A_{k,l}u^{k,l}} = \lambda \mathbf{u^{k,l}},
\end{equation*}
where $\mathbf{A_{k,l}}$ is an $(N/K-2)(M/L-2)\times(NM/KL)$ matrix. The optimization problem on $D_{k,l}$ is
\begin{equation*}
   \mathbf{u_{k,l}=-A^T_{k,l}(A_{k,l}A^T_{k,l})^{-1}A_{k,l}v_{k,l}+v_{k,l}},
\end{equation*}
where $\mathbf{v^{k,l}}$ is a reference solution obtained from the
Monte-Carlo simulation. Then the numerical solution to Fokker-Planck
equation (\ref{5}) is collage of all $\{u_{k,l}\}^{k=1,l=1}_{k=K,l=L}$
on all blocks. However, the optimization problem "pushes" most error
terms to the boundary of domain, which makes the solution is less
accurate near the boundary of each
block. Paper\\cite{dobson2019efficient} introduced the overlapping
block method and the shifting blocks method to reduce the interface error. The overlapping block method enlarges the
blocks and set the interior solution restricted to the original block
as new solution, while the shifting block method moves the interface
to the interior by shifting all blocks and recalculate the solution.

Note that in Section 3.1, we assume that $\lambda$ is a pre-determined
value given by the Monte Carlo simulation. Theoretically one can also
search for the minimum of $\| \mathbf{u - v} \|^{2}$ with respect to
both $\lambda$ and $\textbf{v}$. But empirically we find that the
result is not as accurate as using the killing rate $\lambda$ from the Monte Carlo
simulation, possibly because $\mathbf{v}$ has too much error.

One natural question is that how the simulation error in $\lambda$
would affect the solution $\mathbf{u}$ to the optimization problem
\eqref{opt}. Some linear algebraic calculation shows that the
optimization problem \eqref{opt}  is fairly robust against small
change of $\lambda$.

\begin{theorem}
  \label{lambda}
Let $\mathbf{u}$ and $\mathbf{u}_{1}$ be the solution to the
optimization problem \eqref{opt} with respect to killing rates
$\lambda$ and $\lambda_{1}$ respectively, where $|\lambda - \lambda_{1}|
= \epsilon \ll 1$. Then
$$
  \| \mathbf{u} - \mathbf{u}_{1} \|\leq 2 s_{min}^{-1}\epsilon
  \|\mathbf{v}\|  + O(\epsilon^{2}),
  $$
  where $s_{min}$ is the smallest singular value of $\mathbf{A}$. 
\end{theorem}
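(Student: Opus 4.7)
The plan is to reduce the estimate to a first-order perturbation bound on an orthogonal projection and then expand in $\epsilon$. Writing $\mathbf{A}^{+}=\mathbf{A}^{T}(\mathbf{A}\mathbf{A}^{T})^{-1}$ for the Moore--Penrose pseudoinverse and $P=\mathbf{A}^{+}\mathbf{A}$ for the orthogonal projection onto the row space of $\mathbf{A}$, the closed-form expression for the least-norm solution gives $\mathbf{u}=(\mathbf{I}-P)\mathbf{v}$, and analogously $\mathbf{u}_{1}=(\mathbf{I}-P_{1})\mathbf{v}$ with $P_{1}=\mathbf{A}_{1}^{+}\mathbf{A}_{1}$. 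Since $\mathbf{u}-\mathbf{u}_{1}=(P_{1}-P)\mathbf{v}$, the theorem reduces to showing $\|P_{1}-P\|\le 2 s_{\min}^{-1}\epsilon+O(\epsilon^{2})$ in operator norm, which combined with $\|\mathbf{v}\|$ delivers the claimed bound.

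Next I would write $\mathbf{A}_{1}=\mathbf{A}+\delta\mathbf{E}$ with $\delta=\lambda-\lambda_{1}$, $|\delta|=\epsilon$, where $\mathbf{E}$ is the rectangular identity that restricts to interior grid points so that $\|\mathbf{E}\|\le 1$. Setting $\mathbf{B}=\mathbf{A}\mathbf{A}^{T}$ and $\mathbf{B}_{1}=\mathbf{A}_{1}\mathbf{A}_{1}^{T}=\mathbf{B}+\delta(\mathbf{A}\mathbf{E}^{T}+\mathbf{E}\mathbf{A}^{T})+O(\delta^{2})$, a Neumann expansion yields $\mathbf{B}_{1}^{-1}=\mathbf{B}^{-1}-\delta\mathbf{B}^{-1}(\mathbf{A}\mathbf{E}^{T}+\mathbf{E}\mathbf{A}^{T})\mathbf{B}^{-1}+O(\delta^{2})$. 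Plugging this into $P_{1}=\mathbf{A}_{1}^{T}\mathbf{B}_{1}^{-1}\mathbf{A}_{1}$ and collecting the three sources of $O(\delta)$ terms (from the two linear factors of $\mathbf{A}_{1}$ and from the expansion of $\mathbf{B}_{1}^{-1}$), the identities $\mathbf{A}^{T}\mathbf{B}^{-1}\mathbf{A}=P$ and $\mathbf{B}^{-1}\mathbf{A}=(\mathbf{A}^{+})^{T}$ combine the cross terms into the clean expression
\begin{equation*}
P_{1}-P=\delta\bigl[\mathbf{A}^{+}\mathbf{E}(\mathbf{I}-P)+(\mathbf{I}-P)\mathbf{E}^{T}(\mathbf{A}^{+})^{T}\bigr]+O(\delta^{2}),
\end{equation*}
which is self-adjoint, as it must be since both $P$ and $P_{1}$ are orthogonal projections.

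To finish, I take operator norms: using $\|\mathbf{A}^{+}\|=s_{\min}^{-1}$, $\|\mathbf{I}-P\|\le 1$, and $\|\mathbf{E}\|\le 1$, each of the two terms in the bracket is bounded by $s_{\min}^{-1}$, so $\|P_{1}-P\|\le 2 s_{\min}^{-1}|\delta|+O(\delta^{2})=2 s_{\min}^{-1}\epsilon+O(\epsilon^{2})$, and then $\|\mathbf{u}-\mathbf{u}_{1}\|\le\|P_{1}-P\|\,\|\mathbf{v}\|$ gives the theorem. The main bookkeeping obstacle is pairing the three first-order contributions so that the projectors $P$ and $\mathbf{I}-P$ emerge naturally on the two ends of the bracket; getting this cancellation right is what produces the sharp constant $2$ rather than a looser four-term bound, and it is also what implicitly uses the assumption that $\mathbf{A}$ has full row rank so that $\mathbf{B}^{-1}$ and the Neumann expansion are well-defined.
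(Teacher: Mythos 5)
Your proposal is correct and follows essentially the same route as the paper's proof: both reduce the estimate to bounding the change in the orthogonal projection $P=\mathbf{A}^{T}(\mathbf{A}\mathbf{A}^{T})^{-1}\mathbf{A}$, do a Neumann expansion of $(\mathbf{A}_{1}\mathbf{A}_{1}^{T})^{-1}$, collect the three first-order terms into the form $(\mathbf{I}-P)\mathbf{E}^{T}(\mathbf{A}^{+})^{T}+\mathbf{A}^{+}\mathbf{E}(\mathbf{I}-P)$, and bound each summand by $s_{\min}^{-1}\epsilon$ via the SVD (the paper writes this with $\mathbf{v}\mathbf{D}_{2}\mathbf{v}^{T}=\mathbf{I}-P$ and $\|\mathbf{A}^{T}(\mathbf{A}\mathbf{A}^{T})^{-1}\|=s_{\min}^{-1}$, which is exactly your $\|\mathbf{I}-P\|\le 1$ and $\|\mathbf{A}^{+}\|=s_{\min}^{-1}$). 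The only cosmetic difference is that you factor $\epsilon$ out of the perturbation matrix $\mathbf{E}$, which makes the bookkeeping slightly cleaner, and you add a useful sanity check that the first-order term is self-adjoint.
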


\begin{proof}
  Let $\mathbf{E}=\left[\epsilon
    \mathbf{I}_{(N-2)(M-2)}|\mathbf{0}\right]$ be an $(N-2)(M-2)\times
  (NM)$ perturbation matrix. Let $\mathbf{\widetilde{A}=A+E}$,
  $\mathbf{B=A^T(AA^T)^{-1}A}$ and
  $\mathbf{\widetilde{B}=\widetilde{A}^T(\widetilde{A}\widetilde{A}^T)^{-1}\widetilde{A}}$. Since
  $\mathbf{u} = \mathbf{v} - \mathbf{B} \mathbf{v}$ and
  $\mathbf{u}_{1} = \mathbf{v} - \mathbf{\widetilde{B}} \mathbf{v}$,
  it is sufficient to prove
\begin{equation*}
\mathbf{\Arrowvert B-\widetilde{B}} \Arrowvert\leq 2 s^{-1}_{\min}\epsilon + O(\epsilon^2). 
\end{equation*}

Note that 
\begin{equation*}
    \mathbf{\widetilde{A}\widetilde{A}^T}=\mathbf{(A+E)(A+E)^T=AA^T+A E^T+EA^T+E E^T}, 
\end{equation*}
and since $\|\mathbf{E E^T}\|$ is $O(\epsilon^2)$, we can neglect it when we consider the inverse matrix of $\mathbf{\widetilde{A}\widetilde{A}^T}$, then
\begin{equation*}
    \mathbf{(\widetilde{A}\widetilde{A}^T)^{-1}}=\mathbf{(AA^T)^{-1}-(AA^T)^{-1}(A E^T + E A^T)(AA^T)^{-1}},\\
\end{equation*}
and without considering the high order term $O(\epsilon^2)$, we can see 
\begin{equation*}
\begin{split}
   \mathbf{\widetilde{A}^T(\widetilde{A}\widetilde{A}^T)^{-1}\widetilde{A}} &= \mathbf{(A^T +E^T)((AA^T)^{-1}-(AA^T)^{-1}(A E^T+E A^T)(AA^T)^{-1})(A+E)}  \\
    &= \mathbf{A^T(AA^T)^{-1}A + (E^T(AA^T)^{-1}A-A^T(AA^T)^{-1}(A E^T)(AA^T)^{-1}A)} \\
    &\ + \mathbf{(A^T(AA^T)^{-1}E-A^T(AA^T)^{-1}(E A^T)(AA^T)^{-1}A)}  \\
    &= \mathbf{B+[E^T-A^T(AA^T)^{-1}(A E^T)](AA^T)^{-1}A}\\
    &\ +\mathbf{A^T(AA^T)^{-1}[E-(E A^T)(AA^T)^{-1}A]}
\end{split}
\end{equation*}
Consider the singular value decomposition(SVD) of matrix $\mathbf{A}$, i.e. $\mathbf{A=uS v^T}$, wherein
$\mathbf{S}=\begin{bmatrix}
\begin{array}{ccc|c}
s_1 &  &   &0\\
    & \ddots &   & \vdots\\
    &         &s_{(N-2)(M-2)} & 0\\
\end{array}
\end{bmatrix}$ is an $(N-2)(M-2)\times (NM)$ matrix  and both $\mathbf{u}\in\mathbb{R}^{(N-2)(M-2)\times (N-2)(M-2)}, \mathbf{v}\in\mathbb{R}^{(NM)\times (NM)}$ are orthogonal. Then $\mathbf{A^T(AA^T)^{-1}A=v D_1v^T}$, where $\mathbf{D_1}=\left[
	\begin{array}{cc}
	\mathbf{I}_{(N-2)(M-2)}& 0 \\ 
	0& 0
	\end{array}
	\right]_{(NM)\times (NM)}$,
and
\begin{equation*}
\begin{split}
   \mathbf{(E^T-A^T(AA^T)^{-1}AE^T)} &=\mathbf{(E^T-vD_1v^TE^T)}\\
   &=\mathbf{v D_2v^TE^T},\ \text{where}\ \mathbf{D_2=I-D_1}. \\
   \mathbf{E-(EA^T)(AA^T)^{-1}A} &=\mathbf{E-EA^T(AA^T)^{-1}A}\\
    &=\mathbf{E-E v D_1v^T}\\
    &=\mathbf{E v D_2v^T}.
\end{split}
\end{equation*}
Note that $\|\mathbf{vD_2v^TE^T}\|\leq\epsilon, \|\mathbf{E v D_2v^T}\|\leq\epsilon$.
Since $\mathbf{A^T(AA^T)^{-1}}$ and $\mathbf{(AA^T)^{-1}A}$ are two generalized inverse of $\mathbf{A}$, 
\begin{equation*}
    \mathbf{A^T(AA^T)^{-1}=v S^{*T}u^T, \ (AA^T)^{-1}A=u S^*v},
\end{equation*}
where 
\begin{equation}
\mathbf{S}^*
=\begin{bmatrix}
\begin{array}{ccc|c}
\frac{1}{s_1} &  &   &0\\
    & \ddots &   & \vdots\\
    &         &\frac{1}{s_{(N-2)(M-2)}} & 0\\
\end{array}
\end{bmatrix}
\end{equation}
and $\| \mathbf{A^T(AA^T)}^{-1}\|=\|\mathbf{(AA^T)^{-1}A}\|=\frac{1}{s_{\min}}$, hence we conclude that 
\begin{equation*}
\|\mathbf{B-\widetilde{B}}\|\leq 2{s_{\min}}^{-1}\epsilon+ O(\epsilon^2). 
\end{equation*}
\end{proof}

\begin{remark}
It is very difficult to estimate the minimum singular value of matrix
$\mathbf{A}$ analytically, even for the simplest case when the
Fokker-Planck equation is just a heat equation. But empirically we
find that $s_{min}^{-1}$ is usually not very large. For example,
$s_{min}^{-1}$ for the gradient flow with a double well potential is
Section 5.4 is $0.4988$, and $s_{min}^{-1}$ for the ``ring example'' in
Section 5.3 is only $0.2225$.
\end{remark}

\section{Sensitivity analysis of QSD} A stochastic differential equation has a
QSD usually because it has a natural absorbing state. For example, in ecological models,
this absorbing state is the natural boundary of the domain at which
the population of some species is zero. Obviously invariant
probability measures are easier to study than QSDs. One interesting question is
that if we slightly modify the equation such that it does not have
absorbing states any more, how can we quantify the difference between
QSD and the invariant probability measure after the modification? This
is called the sensitivity analysis of QSDs.

In this section, we focus on the difference between the QSD of a
stochastic differential equation $X=\{X_t,
t\in\mathbb{R}\}$ and the invariant probability measure of a
modification of $X$, denoted by $Y = \{ Y_{t}, t \in \mathbb{R}
\}$. For the sake of simplicity, this paper only compares the 
QSD (resp. invariant probability measure) of the numerical trajectory
of $X$ (resp. $Y$), denoted by $\hat{X} = \{
\hat{X}^{\delta}_{n}, n \in \mathbb{Z}_{+}\}$ (resp. $\hat{Y} = \{
\hat{Y}^{\delta}_{n}, n \in \mathbb{Z}_{+}\}$). Denote the QSD
(resp. invariant probability measure) of $\hat{X}$ (resp. $\hat{Y}$)
by $\hat{\mu}_{X}$ (resp. $\hat{\mu}_{Y}$) and the QSD
(resp. invariant probability measure) of the original SDE $X$ (resp. $Y$)
 by $\mu_{X}$ (resp. $\mu_{Y}$). The sensitivity of invariant probability
measure against time discretization has been addressed in \cite{dobson2021using}. When the time step size of
the time discretization is small enough, the invariant probability
measure $\mu_{X}$ is close to the numerical invariant probability
measure $\mu_{Y}$. The case of QSD is analogous. Hence
$d(\hat{\mu}_{X}, \hat{\mu}_{Y})$ is usually a good approximation of
$d(\mu_{X} , \mu_{Y})$.

We are mainly interested in the following two different modifications
of $X$.

\textbf{Case 1}: \textbf{Reflection at $\partial\mathcal{X}$} One easy
way to modify $X$ to eliminate the absorbing state is to add a
reflecting boundary. This method preserves the local dynamics but
changes the boundary condition. More precisely, the trajectory of
$\hat{X}$ follows that of $\hat{Y}$ until it hits the boundary
$\partial\mathcal{X}$. Without loss of generality assume $\partial\mathcal{X}$ is a
smooth manifold embedded in $\mathbb{R}^{d}$. If $\hat{Y}^{\delta}_{n}
= \hat{X}^{\delta}_{n}
\notin \partial\mathcal{X}$ but $\hat{X}^{\delta}_{n+1} \in \partial\mathcal{X}$, then
$\hat{Y}^{\delta}_{n+1}$ is the mirror reflection of
$\hat{X}^{\delta}_{n+1}$ against the boundary of
$\partial\mathcal{X}$. Still take the 2D case as an example. Let 
$\hat{Y}^{\delta}_n=\begin{bmatrix}
  y_{1,n} \\
  y_{2,n} 
\end{bmatrix}$. Without loss of generality, suppose $y_{1, n+1}$ first hits
the absorbing set $\partial\mathcal{X} = \{ x
\leq 0\}$, then 
\begin{equation*}
  \hat{Y}^{\delta}_{n+1}=\begin{bmatrix}
  -y_{1, n+1} \\
  y_{2, n+1} 
\end{bmatrix}  
\end{equation*}\\

\textbf{Case 2}: \textbf{Demographic noise in ecological models.} We
would also like to address the case of demographic noise in a class of
ecological models, such as population model and epidemic model. For
the sake of simplicity consider an 1D birth-death process with some
environment noise
\begin{equation}
  \label{BD}
  \mathrm{d}Y_{t} = f(Y_{t}) \mathrm{d}t + \sigma Y_{t}
  \mathrm{d}W_{t} \,,
  \end{equation}
  where $Y_{t} \in \mathbb{R}$. It is known that for suitable $f(Y_{t})$, the
  probability of $Y_{t}$ hitting zero in finite time is zero
  \cite{dobson2019efficient}. However, if we take the demographic noise, i.e., the
  randomness of birth/death events, into
  consideration, the birth-death process becomes 
  \begin{equation}
  \label{BD2}
  \mathrm{d}X_{t} = f(X_{t}) \mathrm{d}t + \sigma' X_{t}
  \mathrm{d}W^{(1)}_{t} + \epsilon \sqrt{X_{t}} \mathrm{d}W^{(2)}_{t} \,,
  \end{equation}
  where $\epsilon \ll 1$ is a small parameter that is proportional to
  $-1/2$-th power of the population size, $\sigma'$ is the new parameter
  that address the separation of environment noise and demographic
  noise. For example, if the steady state of $X_{t}$ is around $1$, we
  can choose $\sigma' = \sqrt{\sigma^{2} - \epsilon^{2}}$. Different from equation
  \eqref{BD}, equation \eqref{BD2} usually can hit the boundary $x = 0$ in
  finite time with probability one. Therefore, equation \eqref{BD}
  admits an invariant probability measure while equation \eqref{BD2}
  has a QSD. One very interesting question is that, if $\epsilon$ is
  sufficiently small, how different is the invariant probability measure of
  equation \eqref{BD} from the QSD of equation \eqref{BD2}? This is
  very important in the study of ecological models because
  theoretically every model is subject a small demographic noise. If
  the invariant probability measure is dramatically different from the
  QSD after adding a small demographic noise term, then the equation \eqref{BD}
  is not a good model due to high sensitivity, and we must study the
  equation \eqref{BD2} directly.

  \bigskip

  We roughly follow \cite{dobson2021using} to carry out the sensitivity
  analysis of QSD. Here we slightly modify $\hat{X}^{\delta}_{n}$ such
  that if $\hat{X}^{\delta}_{n} \in \partial \mathcal{X}$, we
  immediately re-sample $\hat{X}^{\delta}_{n}$ from the QSD
  $\hat{\mu}_{X}$. This new process, denoted by $\tilde{X}^{\delta}_{n}$,
  admits an invariant probability measure $\hat{\mu}_{X}$. Now denote
  the transition kernel of $\tilde{X}^{\delta}_{n}$ and
  $\hat{Y}^{\delta}_{n}$ by $P_{X}$ and $P_{Y}$ respectively.  Let
  $T>0$ be a fixed constant. Let $d_{w}(\cdot, \cdot)$ be the
  1-Wasserstein distance defined in Section 2. Motivated by \cite{johndrow2017error}, we can decompose
  $d_w(\mu_X, \mu_Y)$ via the following triangle inequality:
\begin{equation*}
    d_w(\mu_X,\mu_Y)\leq d_w(\mu_X P^T_X, \mu_X P^T_Y)+d_w(\mu_X P^T_Y, \mu_Y P^T_Y). 
\end{equation*}
If the transition kernel $P^T_Y$ has enough contraction such that
\begin{equation*}
    d_w(\mu_X P^T_Y, \mu_Y P^T_Y)\leq \alpha d_w(\mu_X,\mu_Y)
\end{equation*}
for some $\alpha<1$, then we have
\begin{equation*}
    d_w(\mu_X,\mu_Y)\leq\frac{d_w(\mu_X P^T_X, \mu_X P^T_Y)}{1-\alpha}.
  \end{equation*}

  Therefore, in order to estimate $d_w(\mu_X,\mu_Y)$, we need to look
  for suitable numerical estimators of the finite time error and the
  speed of contraction of $P^T_Y$. The finite time error can be easily
  estimated in both cases. And the speed of contraction $\alpha$ comes
  from the geometric ergodicity of the Markov process $\hat{Y}$. If
  our numerical estimation gives  
$$
  \mathrm{d}_{w}(\mu P^{T}_{Y}, \nu P^{T}_{Y}) \leq C e^{- \gamma T} \,,
$$
then we set $\alpha = e^{-\gamma T}$. As discussed in \cite{dobson2021using},
this is a quick way to estimate $\alpha$, and in practice it does not
differ from the ``true upper bound'' very much. The ``true upper
bound'' of $\alpha$ in \cite{dobson2021using} comes from the extreme value theory,
which is much more expensive to compute. 

\subsection{Estimation of contraction rate}
Similar as in \cite{li2020numerical}, we use the following coupling method to estimate the contraction
rate $\alpha$. Let $\hat{Z}^\delta_n=(\hat{Y}^1_n, \hat{Y}^2_n)$ be a
Markov process in $\mathbb{R}^{2d}$ such that $\hat{Y}^1_n$ and
$\hat{Y}^2_n$ are two copies of $\hat{Y}^{\delta}_{n}$. Let the first passage time to the ``diagonal''
hyperplane
$\{(\mathbf{x},\mathbf{y}\in\mathbb{R}^{2d})|\mathbf{y}=\mathbf{x}\}$
be the \textbf{coupling time}, which is denoted by $\tau^C$. Then by
Lemma \ref{l1} 
\begin{equation*}
    d_w(\mu_X P^t_Y, \mu_Y P^t_Y)\leq \mathbb{P}[\tau^C>t].
  \end{equation*}
As discussed in \cite{li2020numerical}, we need a hybrid coupling scheme to make
sure that two numerical trajectories can couple. Some coupling methods such as
reflection coupling or synchronous coupling are implemented in the
first phase to bring two numerical trajectories together. Then we
compare the probability density function for the next step and
couple these two numerical trajectories with the maximal possible
probability (called the maximal coupling). After doing this for many
times, we will have many samples of $\tau^{C}$ denote by ${\bm \tau^C}$. We use the exponential tail of $\mathbb{P}[ \tau^{C} > t]$ to estimate
the contraction rate $\alpha$. More precisely, we look for a constant
$\gamma > 0$ such that
$$
  -\gamma = \lim_{t \rightarrow \infty} \frac{1}{t}\log (\mathbb{P}[ \tau^{C} > t])
$$
if the limit exists. See \textbf{Algorithm \ref{Alg:coupling}} for the detail
of implementation of coupling. 
Note that we cannot simply compute the contraction rate start from $t=0$ because only the tail of coupling time can be considered as exponential distributed. In practice, we apply the same method as we compute the killing rate in section 3.1. After having ${\bm \tau^C}$, it is
easy to choose a sequence of times $t_{0}, t_{1}, \cdots, t_{n}$ and
calculate $n_{i} = | \{ \tau^C_{m} > t_{i} \,|\, 0 \leq m \leq M\} |$
for each $i = 0, \cdots n$. Then $p_{i} = n_{i}/M$ is an
estimator of $\mathbb{P}_{\mu}[ \tau^C > t_{i}]$. Now let $p_{i}^{u}$
(resp. $p_{i}^{l}$) be the upper (resp. lower) bound of the confidence
interval of $p_{i}$ such that
$$
  p_{i}^{u} = \tilde{p} + z \sqrt{\frac{ \tilde{p}}{\tilde{n}_{i}}(1 -
  \tilde{p})} \quad \mbox{ resp. } p_{i}^{l} = \tilde{p} - z \sqrt{\frac{ \tilde{p}}{\tilde{n}_{i}}(1 -
  \tilde{p})} \,,
$$
where $z = 1.96$, $\tilde{n}_{i} = n_{i} + z^{2}$ and $\tilde{p}=
\frac{1}{\tilde{n}}(n_i+\frac{z^2}{2})$. Let $t_{n}$ be the largest
time that we can still collect available samples. If there exist
constants $C$ and $i_{0} < n$ such that $p_{i}^{l}
\leq C e^{\gamma t_{i}} \leq p_{i}^{u}$ for each $i_{0} \leq i \leq
n$, we say that the exponential tail starts at $t = t_{i_{0}}$. We
accept the estimate of the exponential tail with rate $e^{-\gamma t}$
if the confidence interval $p_{i_{0}}^{u} - p_{i_{0}}^{l}$ is
sufficiently small, i.e., the estimate of coupling probability at $t =
t_{i_{0}}$ is sufficiently trustable. Otherwise we need to run Algorithm
\ref{Alg:QSD} for longer time to eliminate the initial bias in ${\bm
  \tau^C}$.

\begin{algorithm}
\caption{Estimation of contraction rate $\alpha$}
\begin{algorithmic}[l]
  \label{Alg:coupling}
  \REQUIRE Initial values $x,y\in K$ \\
\ENSURE An estimation of contraction rate $\alpha$ 
\STATE Choose threshold $d>0$
\FOR{$i=1\  \text{to}\  N_s$}
\STATE  $\tau^C_i=0, t=0, (\hat{Y}^1_t, \hat{Y}^2_t)=(x, y)$
\STATE Flag = 0
\WHILE{Flag=0}
\IF{$|\hat{Y}^1_t-\hat{Y}^2_t|>d$}
\STATE Compute $(\hat{Y}^1_{t+1}, \hat{Y}^2_{t+1})$ using reflection
coupling or independent coupling
\STATE $t\leftarrow t+1$
\ELSE
\STATE Compute $(\hat{Y}^1_{t+1}, \hat{Y}^2_{t+1})$ using maximal coupling
\IF{coupled successfully}
\STATE Flag=1
\STATE $\tau^C_i=t$
\ELSE
\STATE $t\leftarrow t+1$
\ENDIF
\ENDIF
\ENDWHILE
\ENDFOR
\STATE Use $\tau^C_1,\cdots,\tau^C_{N_s}$ to compute $\mathbb{P}(\tau^C>t)$
\STATE Fit the tail of $\log\mathbb{P}(\tau^C>t)$ versus $t$ by linear regression. Compute the slope $\gamma$.
\end{algorithmic}
\end{algorithm}

\subsection{Estimator of error terms}
It remains to estimate the finite time error $d_w(\mu_X P^T_X, \mu_X
P^T_Y)$. As we mentioned in the beginning of this section, we will
consider two different cases and estimate the finite time errors
respectively. 

\subsubsection{Case 1: Reflection at $\partial\mathcal{X}$}
Recall that the modified Markov process $\hat{Y}$ reflects at the
boundary $\partial\mathcal{X}$ when it hits the boundary. Hence two trajectories
\begin{equation*}
\begin{split}
    \tilde{X}^\delta_{n+1}&=\tilde{X}^\delta_{n}+f(\tilde{X}^\delta_{n})\delta+\sigma(\tilde{X}^\delta_{n})(W_{(n+1)\delta}-W_{n\delta})\\
    \hat{Y}^\delta_{n+1}&=\hat{Y}^\delta_{n}+f(\hat{Y}^\delta_{n})\delta+\sigma(\hat{Y}^\delta_{n})(W_{(n+1)\delta}-W_{n\delta})
\end{split}
\end{equation*}
are identical if we set the same noise in the simulation
process. $\tilde{X}$ only differs from $\hat{Y}$ when $\tilde{X}$ hits the
boundary $\partial \mathcal{X}$. When $\tilde{X}$ and $\hat{Y}$ hit the
boundary, $\tilde{X}$ is resampled from $\mu_{X}$, and $\hat{Y}$
reflects at the boundary. Hence the
finite time error $d_w(\mu_X P^T_X, \mu_X P^T_Y)$ is bounded from
above by the killing probability within the time interval $[0, T]$
when starting from $\mu_{X}$. 

In order to sample initial value $\mathbf{x}$ from the numerical
invariant measure $\mu_X$, we consider a long trajectory
$\{\tilde{X}^\delta_{n}\}$. The distance between $\tilde{X}$ and the
modified trajectory $\hat{Y}$ is recorded after time $T$, then we
let $\hat{Y} = \tilde{X}$ and restart. See the following algorithm for
the detail. 

\begin{algorithm}
  \caption{Estimate finite time error for Case 1}
  \label{Alg:W1case1}

  \begin{algorithmic}[l]
    \REQUIRE Initial value $\mathbf{x_1}$ \\
\ENSURE An estimator of $d_w(\mu_X P^T_X, \mu_X P^T_Y)$
\FOR{$i=1\  \text{to}\  N_s$}
\STATE  Using the same noise, simulate $\tilde{X}^\delta$ and $\hat{Y}^\delta$ with initial value $\mathbf{x_i}$ up to T
\STATE Set $K_i=0$, $d_i=0$
\IF{$\tau<T$}
\STATE Regenerate $\tilde{X}^\delta$ as its empirical distribution
\STATE $d_i=d(\tilde{X}^\delta_T, \hat{Y}^\delta_T)$
\ENDIF
\STATE Let $\mathbf{x_{i+1}}=\tilde{X}_T$ 
\ENDFOR
\STATE Return $\frac{1}{N_{s}}\sum^{N_{s}}_{i=1} d_i$
\end{algorithmic}
\end{algorithm}

When the number of samples is sufficiently large,
$\mathbf{x_1},\cdots, \mathbf{x_{N_{s}}}$ are from a long trajectory
of the time-$T$ skeleton of $\tilde{X}_T$. Hence they are approximately
sampled from $\mu_X$. The error term
$d_i=d(\tilde{X}^\delta_T, \hat{Y}^\delta_T)$ for
$\tilde{X}^\delta_0=\hat{Y}^\delta_0=\mathbf{x_i}$ estimates
$d(\tilde{X}_T,\hat{Y}_T)$. Let $\mu_{X}^{2}$ be the probability measure
on $\mathbb{R}^{d} \times \mathbb{R}^{d}$ that is supported by the
hyperplane $\{ (x, y) \in \mathbb{R}^{d} \times \mathbb{R}^{d} \,|\, x
= y\}$ such that $\mu_{X}^{2}( \{ (x, x) \,|\, x \in A\}) =
\mu_{X}(A)$ for any $A \in \mathcal{B}( \mathcal{X})$. Since the
pushforward map $\mu_{X}^{2}(P_{X}^{T} \times P_{Y}^{T})$ is a
coupling, it is easy to see that the output of \textbf{Algorithm
\ref{Alg:W1case1}} gives an upper bound of $d_{w}( \mu_{X} P_{X}^{T},
\mu_{X} P_{Y}^{T})$.

From the analysis above, we have the following lemma, which gives an
upper bound of the finite time error $d_w(\mu_X P^T_X, \mu_X
P^T_Y)$. 
\begin{lemma}
  \label{Lem:W1case1}
For the Wasserstein distance induced by the distance given in (\ref{9}), we have
\begin{equation*}
    d_w(\mu_X P^T_X, \mu_X P^T_Y)\leq\int_{\mathbb{R}^d}\mathbb{P}_{x}(\tau<T)\mu_X(dx),
\end{equation*}
where $x$ is the initial value with distribution $\mu_Y$.
\end{lemma}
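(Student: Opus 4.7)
The plan is to exhibit an explicit coupling $\gamma$ of $\mu_X P^T_X$ and $\mu_X P^T_Y$ and then invoke the variational definition of the 1-Wasserstein distance. Concretely, I would draw a single initial state $x_0\sim\mu_X$, set $\tilde{X}^\delta_0=\hat{Y}^\delta_0=x_0$, and run the two processes simultaneously with a common driving Brownian motion (synchronous coupling). Before either trajectory meets $\partial\mathcal{X}$, the update rules for $\tilde{X}^\delta$ and $\hat{Y}^\delta$ are literally identical, so the two trajectories stay equal up to (and at) the first hitting time $\tau$. They begin to differ only when $\tilde{X}^\delta$ reaches $\partial\mathcal{X}$: at that moment $\tilde{X}^\delta$ is re-sampled from $\hat{\mu}_X$ according to the construction in Section 4, while $\hat{Y}^\delta$ reflects off $\partial\mathcal{X}$ as in Case 1.

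Now split the expectation according to whether $\tau<T$ or $\tau\geq T$. On $\{\tau\geq T\}$ the trajectories remain identical throughout $[0,T]$, so $\tilde{X}^\delta_T=\hat{Y}^\delta_T$ and $d(\tilde{X}^\delta_T,\hat{Y}^\delta_T)=0$. On $\{\tau<T\}$ we use the trivial a priori bound $d(\tilde{X}^\delta_T,\hat{Y}^\delta_T)\leq 1$ coming from the truncation built into the metric of Section 2. Therefore
\begin{equation*}
\mathbb{E}_\gamma[d(\tilde{X}^\delta_T,\hat{Y}^\delta_T)]\leq \mathbb{P}[\tau<T],
\end{equation*}
and by conditioning on the common initial point,
\begin{equation*}
\mathbb{P}[\tau<T]=\int_{\mathbb{R}^d}\mathbb{P}_x(\tau<T)\,\mu_X(dx).
\end{equation*}

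To finish, I would verify that $\gamma$, the joint law of $(\tilde{X}^\delta_T,\hat{Y}^\delta_T)$, really is a coupling of $\mu_X P^T_X$ and $\mu_X P^T_Y$: the $\tilde{X}$-marginal transitions via the resampling kernel $P_X$ and starts at $\mu_X$, so its time-$T$ marginal is $\mu_X P^T_X$; similarly $\hat{Y}$ starts at $\mu_X$ and evolves under $P_Y$, giving the second marginal. Then the infimum in the definition of $d_w$ (Section 2.4) yields
\begin{equation*}
d_w(\mu_X P^T_X,\mu_X P^T_Y)\leq \mathbb{E}_\gamma[d(\tilde{X}^\delta_T,\hat{Y}^\delta_T)]\leq \int_{\mathbb{R}^d}\mathbb{P}_x(\tau<T)\,\mu_X(dx),
\end{equation*}
which is the stated bound.

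The proof is short, and the only real subtlety is the bookkeeping around the coupling: one has to be careful that the resampling event for $\tilde{X}$ (which destroys synchrony) can be declared independently of the $\hat{Y}$ component without disturbing either marginal law, and that the metric convention allows the uniform bound $d\leq 1$ on the disagreement event. Once these two points are pinned down, the argument reduces to the synchronous-coupling observation that the only way $\tilde{X}^\delta_T$ can disagree with $\hat{Y}^\delta_T$ is if $\tilde{X}^\delta$ gets absorbed before time $T$.
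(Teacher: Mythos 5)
Your proof is correct and follows essentially the same route as the paper's: both use the synchronous coupling obtained by starting $\tilde{X}$ and $\hat{Y}$ at a common point drawn from $\mu_X$ and driving them with the same noise, observe that the coupled trajectories agree at time $T$ unless $\tau<T$, bound the disagreement by $1$ via the truncated metric, and invoke the infimum in the definition of $d_w$. Your write-up is somewhat more explicit than the paper's about verifying the marginals of the coupling and about the event decomposition, but the underlying idea is identical.
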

\begin{proof}
Note that $\mu_{X}^{2}(P_{X}^{T} \times P_{Y}^{T})$ is a coupling of
$\mu_{X}P_{X}^{T}$ and $\mu_{X}P_{Y}^{T}$. From the definition of Wasserstein distance, we have
\begin{equation*}
\begin{split}
  d_w(\mu_X P^T_X, \mu_X P^T_Y) &\leq \int_{\mathbb{R}^{d} \times
    \mathbb{R}^{d}} d(x, y) \mu_{X}^{2}(P_{X}^{T} \times P_{Y}^{T})(
  \mathrm{d}x\mathrm{d}y)\\
&=  \int_{\mathbb{R}^d}d(xP^T_X, xP^T_Y)\mu_X(dx)\\
     &=\int_{\mathbb{R}^d}\mathbb{P}_x(\tau<T)d(\tilde{X}_T, \hat{Y}_T)\mu_X(dx)\\
     &\leq \int_{\mathbb{R}^d}\mathbb{P}_{x}(\tau^{K}<T)\mu_X(dx),
\end{split}
\end{equation*}
the inequality in the last step comes from the definition $d(x,y)=\max(1, \|x-y\|)$.
\end{proof}

\subsubsection{Case 2: Impact of a demographic noise
  $\epsilon\sqrt{X_t}\mathrm{d}W_t$} Another common way of modification in
ecological models is to add a demographic noise. Let $\hat{X}$ be the
numerical trajectory of the SDE with an additive demographic noise $\epsilon \sqrt{X_{t}}
\mathrm{d}W_{t}$. Let $\tilde{X}$ be the modification of $\hat{X}$
that resample from $\mu_{X}$ whenever hitting $\partial \mathcal{X}$
so that it admits $\mu_{X}$ as an invariant probability measure. Let
$\hat{Y}$ be the numerical trajectory of the SDE without demographic noise
so that $\hat{Y}$ admits an invariant probability measure. We have trajectories
\begin{equation*}
\begin{split}
    \tilde{X}^\delta_{n+1}&=\tilde{X}^\delta_{n}+f(\tilde{X}^\delta_{n})\delta+\sigma(\tilde{X}^\delta_{n})(W_{(n+1)\delta}-W_{n\delta})
    + \epsilon \sqrt{\hat{X}^{\delta}_{n}}(W'_{(n+1)\delta} - W'_{n\delta} )\\
    \hat{Y}^\delta_{n+1}&=\hat{Y}^\delta_{n}+f(\hat{Y}^\delta_{n})\delta+\sigma'(\hat{Y}^\delta_{n})(W_{(n+1)\delta}-W_{n\delta}) \,.
\end{split}
\end{equation*}
Here we assume that $\hat{Y}^\delta$ has zero probability to hit the
absorbing set $\partial\mathcal{X}$ in finite time. Different from the Case 1, we
will need to study the effect of the demographic noise. When
estimating the finite time error $d_w(\mu_X P^T_X, \mu_X P^T_Y)$, we 
still need to sample the initial value $\mathbf{x}$ from $\mu_X$ and
record the distance between these two trajectories $\tilde{X}^\delta$
and $\hat{Y}^\delta$ up to time $T$. The distance
between $\tilde{X}^\delta$ and $\hat{Y}^\delta$ can be decomposed into two
parts: one is from the killing and resampling, the other is from the
demographic noise. The first term is the same as in Case 1. The second term is due
to the nonzero demographic noise that can separate $\tilde{X}$ and
$\hat{Y}$ before the killing. In a population model, this effect is
more obvious when one species has small population, because $\sqrt{x}
\gg x$ when $0 < x \ll 1$. See the description of Algorithm
\ref{Alg:W1case2} for the full detail.

\begin{algorithm}
\caption{Estimate finite time error for Case 2}
\begin{algorithmic}[l]
  \label{Alg:W1case2}
  \REQUIRE Initial value $\mathbf{x_1}$ \\
\ENSURE An estimator of $d_w(\mu_X P^T_X, \mu_X P^T_Y)$
\FOR{$i=1\  \text{to}\  N_s$}
\STATE  Using the same noise, simulate $\tilde{X}^\delta$ and $\hat{Y}^\delta$ with initial value $\mathbf{x_i}$ up to T
\STATE Set $ \text{Flag} = 0$, $d_i=0$
\IF{$\tau < T$}
\STATE Regenerate $\tilde{X}^\delta$ as its empirical distribution
\STATE $\eta_i=d(\tilde{X}^\delta_T, \hat{Y}^\delta_T)$
\STATE Flag = 1
\ELSE
\STATE $\theta_i=d(\tilde{X}^\delta_T, \hat{Y}^\delta_T)$
\ENDIF
\STATE $d_i=\theta_i + \mathbf{1}_{\{\text{Flag}=1\}}(\eta_i-\theta_i)$
\STATE Let $\mathbf{x_{i+1}}=\tilde{X}_T$ 
\ENDFOR
\STATE Return $\frac{1}{N_{s}}\sum^{N_{s}}_{i=1} d_i$
\end{algorithmic}
\end{algorithm}

When $N_{s}$ is sufficiently large, $\mathbf{x_1},\cdots,
\mathbf{x_{N_{s}}}$ are from a long trajectory of the time-$T$
skeleton of $\tilde{X}_T$. Hence they are approximately sampled from
$\mu_X$. The error term $d_i$ for
$\tilde{X}^\delta_0=\hat{Y}^\delta_0=\mathbf{x_i}$ estimates
$d(\tilde{X}_T,\hat{Y}_T)$. A similar coupling argument shows that the
output of Algorithm \ref{Alg:W1case2} is an upper bound of $d_w(\mu_X
P^T_X, \mu_X P^T_Y)$.

For each initial value $x \in \mathbb{R}^{d}$, denote
$$
  \theta_{x} = \mathbb{E}_{x}[ d( \tilde{X}^{\delta}_{T},
  \hat{Y}^{\delta}_{T}) \,|\, \tilde{X}^{\delta}_{0} =
  \hat{Y}^{\delta}_{0} = x, \tau > T] \,.
$$
Similar as in Case 1, the
following lemma  gives an upper bound for the finite time error
$d_w(\mu_XP^T_X, \mu_X P^T_Y)$. 
\begin{lemma}
  \label{Lem:W1case2}
For the Wasserstein distance induced by the distance given in (\ref{9}), we have
\begin{equation*}
    d_w(\mu_X P^T_X, \mu_X P^T_Y)\leq\int\mathbb{P}_{x}(\tau<T)\mu_Y(dx) + \int \theta_x\mu_Y(dx),
\end{equation*}
where $x$ is the initial value with distribution $\mu_Y$ and $\theta_x$ .
\end{lemma}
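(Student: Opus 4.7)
The plan is to mirror the proof of Lemma \ref{Lem:W1case1} almost verbatim, exploiting the same product coupling $\mu_X^2(P_X^T\times P_Y^T)$ of $\mu_X P_X^T$ and $\mu_X P_Y^T$ described just before Lemma \ref{Lem:W1case1}. As in Case~1, by the definition of the 1-Wasserstein distance one immediately gets
\begin{equation*}
    d_w(\mu_X P_X^T, \mu_X P_Y^T) \leq \int_{\mathbb{R}^d} \mathbb{E}\bigl[\,d(\tilde X_T^\delta, \hat Y_T^\delta) \,\big|\, \tilde X_0^\delta = \hat Y_0^\delta = x\bigr]\,\mu_X(dx).
\end{equation*}
So the entire job is to control the inner expectation.

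The key new step, and the only real difference from Case~1, is to split the inner expectation according to whether the killing event $\{\tau<T\}$ occurs. Writing
\begin{equation*}
\mathbb{E}_x\bigl[d(\tilde X_T^\delta, \hat Y_T^\delta)\bigr]
=\mathbb{E}_x\bigl[d(\tilde X_T^\delta, \hat Y_T^\delta)\mathbf{1}_{\{\tau<T\}}\bigr]
+\mathbb{E}_x\bigl[d(\tilde X_T^\delta, \hat Y_T^\delta)\mathbf{1}_{\{\tau\geq T\}}\bigr],
\end{equation*}
the first term is bounded by $\mathbb{P}_x(\tau<T)$ because the truncated distance used in Section~2 satisfies $d(\cdot,\cdot)\leq 1$ (this is exactly the step invoked at the end of the proof of Lemma \ref{Lem:W1case1}), and the second term equals $\mathbb{P}_x(\tau\geq T)\,\theta_x\leq\theta_x$ by the very definition of $\theta_x$ given just above the lemma statement.

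Putting the two pieces together and integrating against $\mu_X(dx)$ yields
\begin{equation*}
d_w(\mu_X P_X^T, \mu_X P_Y^T)\leq \int \mathbb{P}_x(\tau<T)\,\mu_X(dx) + \int \theta_x\,\mu_X(dx),
\end{equation*}
which is exactly the claimed bound (the $\mu_Y$ in the statement appears to be a typographic slip, consistent with the $\mu_X$ used throughout Case~1; this is the only point where the proof might look off). The main conceptual obstacle is simply recognizing that the presence of the demographic noise $\epsilon\sqrt{X_t}\,\mathrm{d}W_t^{(2)}$ forces the non-killing contribution to be nonzero, which is precisely why the extra $\theta_x$ term must appear in Case~2 but not in Case~1; everything else is a bookkeeping exercise on top of the coupling inequality. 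I do not anticipate any analytical difficulty beyond confirming that the product coupling remains a valid coupling once the demographic noise is added, which follows because the marginals of $\mu_X^2(P_X^T\times P_Y^T)$ are $\mu_X P_X^T$ and $\mu_X P_Y^T$ by construction, independent of what dynamics $P_X$ and $P_Y$ represent.
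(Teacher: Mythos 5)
Your proof is correct and follows essentially the same route as the paper: the same product coupling $\mu_X^2(P_X^T\times P_Y^T)$, the same split of the inner expectation on the event $\{\tau<T\}$, the bound $d\leq 1$ on the killing contribution, and the definition of $\theta_x$ on the survival contribution. You also correctly identified that the $\mu_Y$ appearing in the statement (and in one line of the paper's proof) is a typographic slip for $\mu_X$, which is what is actually used in the argument.
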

\begin{proof}
Note that $\mu_{X}^{2}(P_{X}^{T} \times P_{Y}^{T})$ is a coupling of
$\mu_{X}P_{X}^{T}$ and $\mu_{X} P_{Y}^{T}$. From the definition of the
Wasserstein distance, we have
\begin{equation*}
    \begin{split}
      d_w(\mu_Y P^T_X, \mu_Y P^T_Y) &\leq \int_{\mathbb{R}^{d} \times
        \mathbb{R}^{d}} d(x,y) \mu_{X}^{2}(P_{X}^{T} \times
      P_{Y}^{T})( \mathrm{d}x \mathrm{d}y)\\
&=    \int_{\mathbb{R}^d}d(xP^T_X, xP^T_Y)\mu_X(dx)\\
     &=\int_{\mathbb{R}^d}\mathbb{P}_x(\tau<T)d(\tilde{X}_T,
     \hat{Y}_T)\mu_X(dx)+\int_{\mathbb{R}^d}\mathbb{P}_x(\tau>T)d(\tilde{X}_T,
     \hat{Y}_T)\mu_X(dx)\\ 
     &\leq \int_{\mathbb{R}^d}\mathbb{P}_{x}(\tau<T)\mu_X(dx) + \int \theta_x\mu_X(dx)
    \end{split}
\end{equation*}
according to the definition of $\theta_{x}$. 
\end{proof}

\section{Numerical Examples}
\subsection{Ornstein–Uhlenbeck process}
The first SDE example is the Ornstein–Uhlenbeck process :
\begin{equation}
 \text{d} X_t= \theta(\mu-X_t)\text{d}t + \sigma\text{d}W_t,   
\end{equation}
where $\theta>0$ and $\sigma>0$ are parameters, $\mu$ is a
constant. In addition, $W_t$ is a Wiener process, and $\sigma$ is the strength of the noise. In our simulation, we set $\theta=1,
\mu=2, \sigma=1$ and the absorbing set $\partial\mathcal{X}=(-\infty,
0] \cup [3, \infty)$. In
addition, we apply the Monte Carlo simulation with $512$  mesh points on
the interval $[0, 3]$. 

We first need to use Algorithm \ref{Alg:QSD} to estimate the survival
rate $\lambda$. Our simulation
uses Euler-Maruyama scheme with $\delta = 0.001$ and sample size $N =
10^{6}$ and $N = 10^{8}$ depending on the setting. All samples of killing times are recorded to
plot the exponential tail. The mean of killing times gives
an estimate $\lambda = -0.267176$. The exponential tail of $\mathbb{P}[ \tau
> t]$ vs. $t$, the upper and lower bound of the confidence interval,
and the plot of $e^{-\lambda t}$ are compared in Figure
\ref{F1}. We can see that the plot of $e^{- \lambda t}$ falls in
the confidence interval for all $t$. Hence the estimate of $\lambda$
is accepted. 

\begin{figure}[hbt!]
    \centering
    \includegraphics[width=\textwidth]{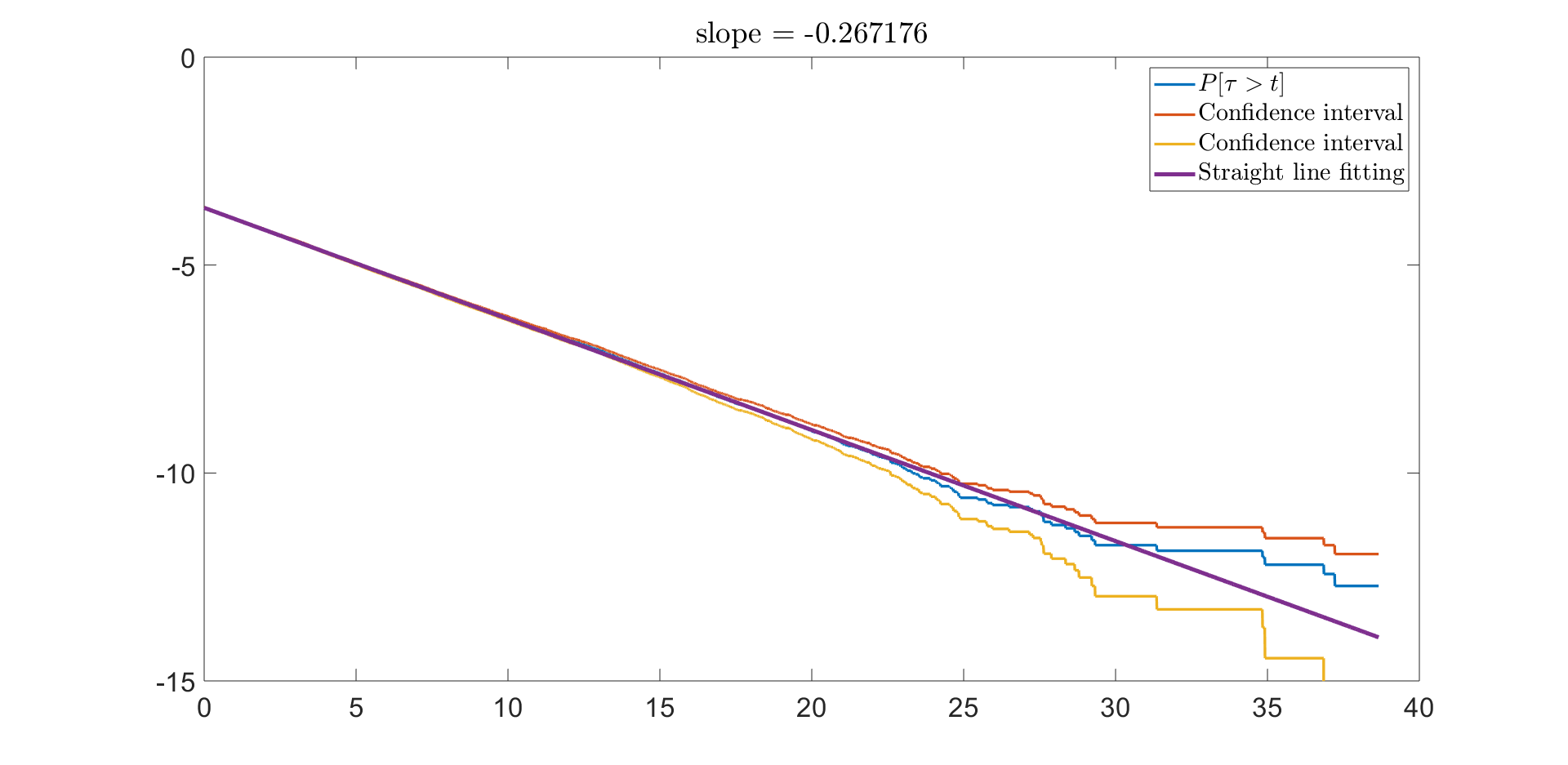}     \caption{Plot of $\mathbb{P}(\tau>t)$ vs. $t$, confidence interval(upper bound and lower bound) and function $y=e^{-\lambda t}$}.
    \label{F1}
  \end{figure}

Furthermore, we would like to show the robustness of our data-driven
QSD solver. The QSD is not explicit given so we use very large sample
size ($10^{10}$ samples) and very small step size ($10^{-4}$) to
obtain a much more accurate solution, which is served as the
benchmark. Then we compare the numerical solutions obtained by the Monte
Carlo method and the data-driven method for QSD with $N=10^6$ and
$N=10^8$ samples, respectively. The result is shown in the first
column of Figure \ref{F2}. The data-driven solver performs much better
than the Monte Carlo approximation for $N=10^6$ samples. It takes
$10^{8}$ samples for the direct Monte Carlo sampler to produce a
solution that looks as good as the QSD solver. Similar as the data-driven Fokker-Planck solver, our
data-driven QSD solver can tolerate high level error in Monte Carlo
simulation that has small spatial correlation. 

It remains to check the effect of Brownian Bridge. We apply
different time step sizes $\delta=0.01$ and $\delta=0.001$ for each
trajectory. We use $10^{7}$ samples for $\delta = 0.001$ and $10^{6}$
samples for $\delta = 0.01$ to make sure that the number of killing
events (for estimating the killing rate) are comparable. When $\delta
= 0.001$, the error is small with and without Brownian bridge
correction. But Brownian bridge correction obviously improves the
quality of solution when $\delta=0.01$. See the lower left panel of
Figure \ref{F2}. This is expected because,
with larger time step size, the probability that the Brownian bridge
hits the absorbing set $\partial \mathcal{X}$ gets higher.

\begin{figure}[hbt!]
    {\centering
    \includegraphics[width=1.1\textwidth]{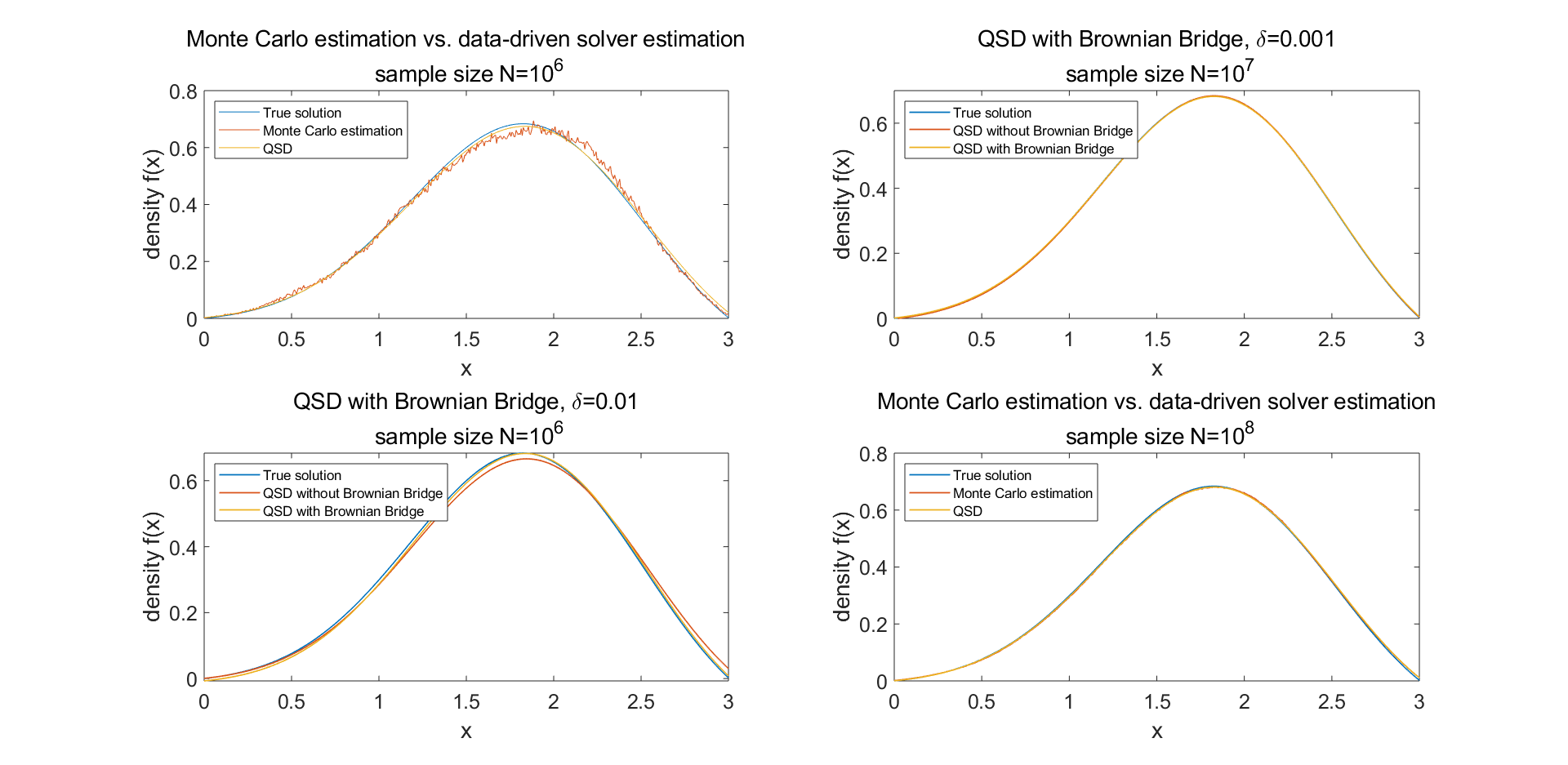}}  \caption{
    $\textbf{Left column}$: Monte Carlo estimation vs.
    data-driven solver estimation. $\textbf{Middle column}$: Effect of Brownian Bridge for sample size $N=10^6$ and $N=10^8$, with time step size 0.001. $\textbf{Right column}$:  Effect of Brownian Bridge for sample size $N=10^6$ and $N=10^8$, with time step size 0.01.}
    \label{F2}
  \end{figure}

\subsection{Wright-Fisher Diffusion}
The second numerical example is the Wright-Fisher diffusion model,
which describes the evolution of colony undergoing random mating,
possible under the additional actions of mutation and selection with
or without dominance \cite{huillet2007wright}. The Wright-Fisher model is an SDE
\begin{equation*}
    dX_t=-X_tdt + \sqrt{X_t(1-X_t)}dW_t,
\end{equation*}
where $W_t$ is a Wiener process and $\partial\mathcal{X}=\{0\}$ is the
absorbing set. By the analysis of \cite{huillet2007wright}, the Yaglom limit, i.e.,
the QSD, satisfies 
\begin{equation*}
    \lim_{t \rightarrow \infty}\mathbb{P}[X_t\in dy|\tau>t] =  2(1-y)dy.
\end{equation*}

The goal of this example is to show the effect of Brownian
bridge when the coefficient of noise is singular at the
boundary. Since the Euler-Maruyama scheme only has an order of
accuracy $0.5$, in the simulation, we apply the Milstein
scheme, which reads  
\begin{equation*}
  \hat{X}^\delta_{n+1}=\hat{X}^\delta_{n} - \hat{X}_{n}^{\delta}
  \delta + \sqrt{\hat{X}^{\delta}_{n}(1-
    \hat{X}^{\delta}_{n})}(W_{(n+1)\delta}-W_{n\delta})  +
  \frac{1}{4}(1 - 2
  \hat{X}^{\delta}_{n})[(W_{(n+1)\delta}-W_{n\delta})^{2} - \delta ]
\end{equation*}

One difficulty of using the Brownian bridge correction in
this model is that the coefficient of the Brownian motion is vanishing
at the boundary. Recall that the strength coefficient of Brownian
bridge is denoted by $\phi$. Since the coefficient of the Brownian
motion is vanishing, the original strength coefficient $\phi =
\sqrt{\hat{X}_{n}^{\delta}(1 - \hat{X}_{n}^{\delta})}$
can dramatically overestimate the probability of hitting the
boundary. In this example, it is not possible to explicitly calculate
the conditional distribution of the Brownian bridge that starts from
$x:=\hat{X}_{n}^{\delta}$ and ends at $y:=\hat{X}_{n+1}^{\delta}$. But we
find that empirically $\phi^{2} = \frac{1}{3}\min\{ x(1-x), y(1-y)\}$
can fix this problem. Since a similar vanishing coefficient of the
Brownian motion also appears in many ecological models, we will
implement this modified  Brownian bridge correction when simulating
these models. 

\begin{figure}[h]
    \centering
    \includegraphics[width=\textwidth]{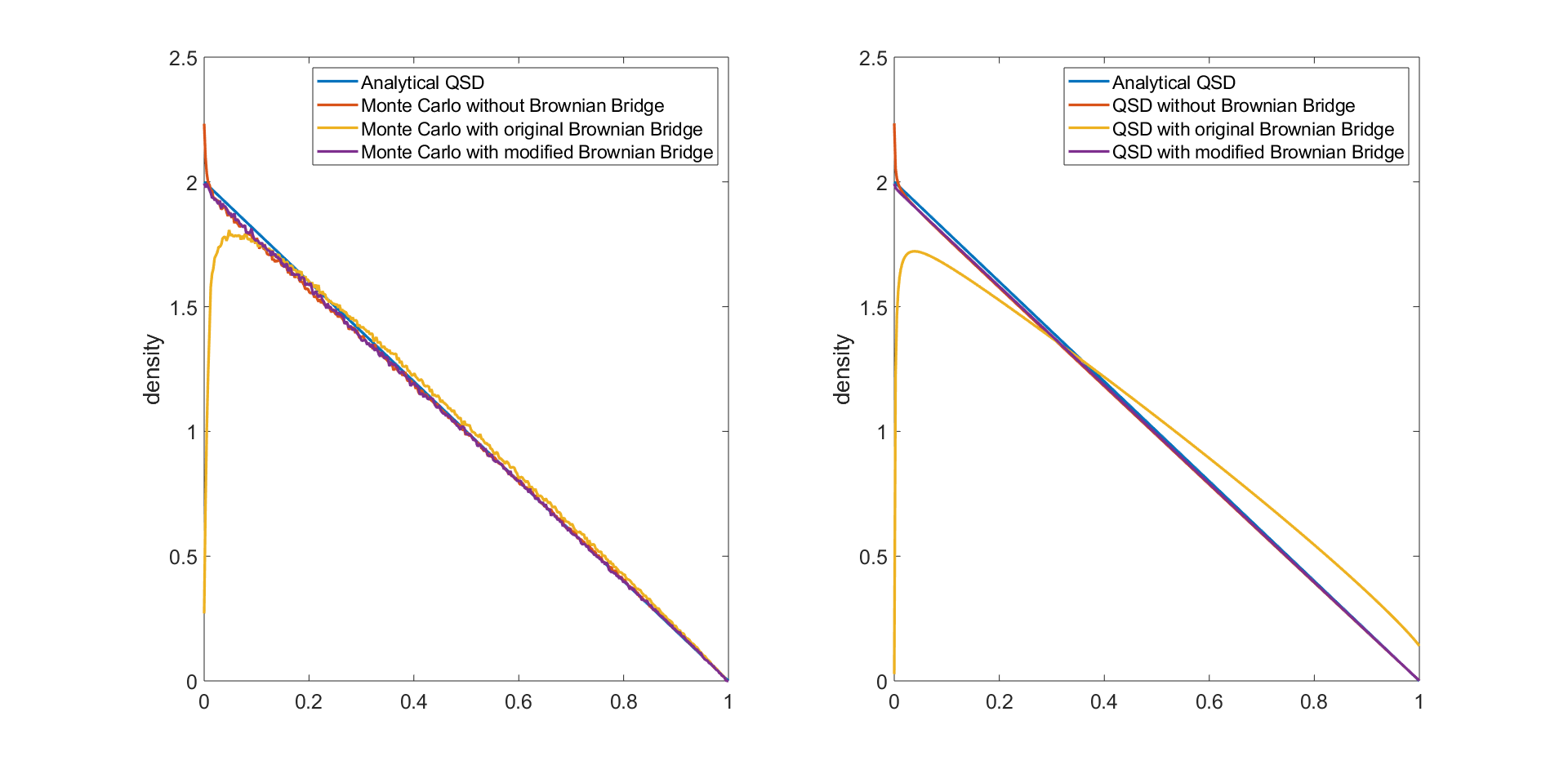}
     \caption{Effect of Brownian Bridge and a correction of Brownian
       Bridge.\ \textbf{Left}: Monte Carlo approximations without
       Brownian Bridge correction, with original Brownian Bridge
       correction, and with modified Brownian Bridge correction, in
       comparison to the analytical QSD. \textbf{Right}:
       Result from the data-driven QSD solver using the Monte Carlo
       simulation data from Left panel. }
     \label{F3}
\end{figure}

The effect of Brownian bridge is shown in the left side of Figure
\ref{F3}.  We compare the solutions obtained via Monte Carlo method and the
data-driven method with Brownian Bridge by running $10^7$ samples on
$[0,1]$ with time step size $\delta=0.01$. The Monte Carlo
approximation  is far from the true density function of
Beta(1,2) near $x = 0$, while the use of the original Brownian Bridge only makes
things worse. The modified Brownian Bridge solves this boundary effect problem
reasonably well. The output of the data-driven QSD solver has a
similar result (Figure \ref{F3} Right).  Let $x=\hat{X}_{n}^{\delta}$ and
$y=\hat{X}_{n+1}^{\delta}$. One can see that the numerical QSD is much
closer to the true distribution if we replace the strength 
coefficient of the Brownian bridge $\phi^{2} = x(1-x)$ by the
modified strength coefficient $\phi^{2} = \frac{1}{3}\min\{
x(1-x), y(1-y)\}$.

\subsection{Ring density function}
Consider the following stochastic differential equation:
\begin{equation*}
  \begin{split}
      dX&=(-4X(X^2+X^2-1)+Y)dt +\epsilon dW^x_t\\
      dY&=(-4X(X^2+Y^2-1)-X)dt +\epsilon dW^y_t,
  \end{split}  
\end{equation*}
where $W^x_t$ and $W^y_t$ are independent Wiener processes. In the
simulation, we set the strength of noise $\epsilon=1$. 

We first look at the approximation obtained by Monte Carlo method with
$256\times256$ mesh points on the domain
$D=[-1.5,1.5]\times[-1.5,1.5]$. The simulation uses step size $\delta=0.001$ and
$N=10^8$ samples. (See upper left panel in Figure
\ref{F4}). The Monte Carlo approximation has too much noise to be
useful. The quality of this solution can be significantly improved by
using the data-driven QSD solver. See upper right panel
in Figure \ref{F4}.
\begin{figure}[h]
    \centering
     \includegraphics[height = 18cm, width=\textwidth]{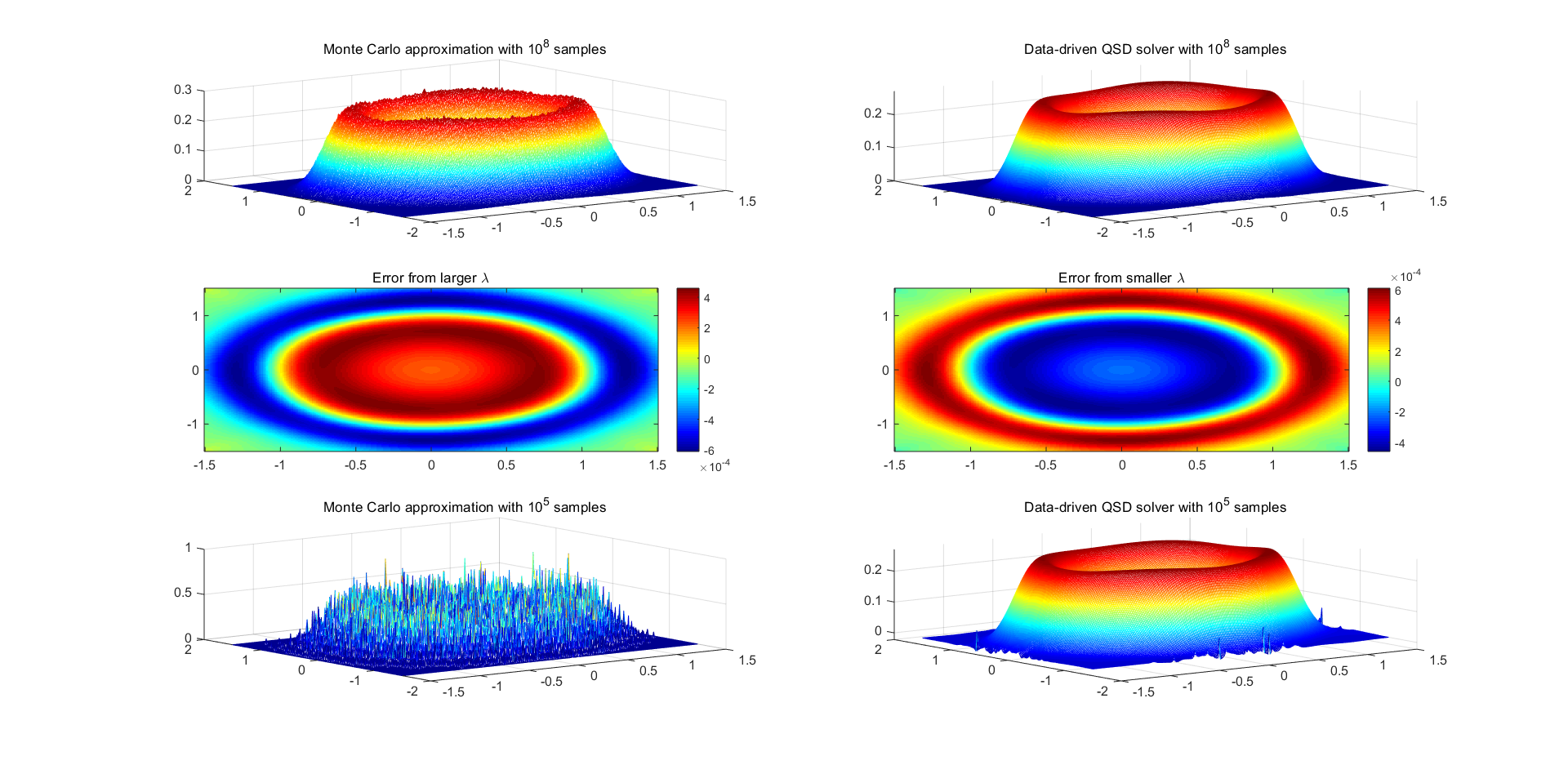}
     \caption{(Ring density) $\textbf{Upper panel:}$The approximation
       by Monte Carlo simulation(left) and the algorithm in Section
       3.2(right) with $256\times 256$ mesh points and $10^8$
       samples. $\textbf{Middle panel}$: Sensitivity effect of small
       change to killing rate $\lambda$. \textbf{Lower panel:} The
       approximation by Monte Carlo simulation with smaller
       samples(left) and the output of the data-driven QSD solver(right).}
     \label{F4}
 \end{figure} 

The simulation result shows the estimated rate of killing
$\lambda=-0.176302$. We use this example to test the sensitivity of solution
$\textbf{u}$ against small change of  the killing rate. We compare the approximations
obtained by setting the killing rate be $\lambda$,
$1.1\lambda$ and $0.9\lambda$ respectively. Heat maps
of the difference between QSDs with ``correct'' and ``wrong'' killing
rates are shown in two middle panels in Figure
\ref{F4}. It shows that difference brought by ``wrong'' rates is only
$\approx O(10^{-4})$, which can be neglected. This result coincides with the
analysis in Theorem \ref{lambda} in this paper.

Finally, we would like to emphasis that the data-driven QSD solver can
tolerate very high level of spatially uncorrelated noise in the
reference solution $\mathbf{v}$. For example, if we use the same long
trajectory with $10^{8}$ samples that generates the top left panel of
Figure \ref{F4}, but only select $10^{5}$ samples with intervals of
$10^{3}$ steps of the numerical SDE solver, the Monte Carlo data
becomes very noisy (Bottom left panel of Figure \ref{F4}). However,
longer intermittency between samples also reduces the spatial
correlation between samples. As a result, the output of the QSD solver
has very little change except at the boundary grid points, because the
optimization problem \eqref{opt} projects most of error to the
boundary of the domain. (See bottom right panel of Figure \ref{F4}.)
This result highlights the need of high quality samplers. A Monte Carlo
sampler with smaller spatial correlation between samples can
significantly reduce the number of samples needed in the data-driven
QSD solver.

\subsection{Sensitivity of QSD: 1D examples} 
In this subsection, we use 1D examples to study the sensitivity of
QSDs against changes on boundary conditions. Consider an 1D gradient
flow of the potential function $V(x)$ with an additive noise perturbation
\begin{equation}
  \label{gradient}
    X_t=-V'(X_{t})dt+\sigma dW_t.
\end{equation}
Let $(-\infty, 0]$ be the absorbing state of $X_{t}$. So if $V(0) < \infty$, 
$X_{t}$ admits a QSD, denoted by $\mu_{X}$. If we let the stochastic
process reflect at $x = 0$, the modified stochastic process, denoted
by $Y_{t}$, admits an invariant probability measure denoted by
$\mu_{Y}$. We will compare the sensitivity of $\mu_{X}$ against the
change of boundary condition for two different cases whose speed of
mixing are different, namely a single well potential function and a double
well potential function.

We choose a single well potential function $V_{1}(x) = (x-1)^{2}$ and a
double well potential function $V_{2}(x) =
x^4-4\sqrt{2}x^3+10x^2-4\sqrt{2}x+1$. The values of minima of both
$V_{1}$ and $V_{2}$ are zero. The values of $V_{1}$ and $V_{2}$ at the
absorbing state are $1$. And the height of the barrier between two
local minima of $V_{2}$ is $1$. The strength of noise is $\sigma =
0.7$ in both examples. See Figure \ref{F5} middle column for plots of these
two potential functions. In both cases, the QSD and the invariant
probability measure are computed on the domain $D = [0, 3]$. To
further distinguish these two cases, we denote the QSD of equation \eqref{gradient} with
absorbing state $x = 0$ and potential function $V_{1}(x)$
(resp. $V_{2}(x)$) by $\mu_{X}^{1}$
(resp. $\mu_{X}^{2}$) and the invariant probability measure of
equation \eqref{gradient} with reflection boundary at $(-\infty, 0]$ and potential function $V_{1}(x)$
(resp. $V_{2}(x)$) by $\mu_{Y}^{1}$
(resp. $\mu_{Y}^{2}$). Probability measures $\mu_{X}^{1}$ and
$\mu_{Y}^{1}$ (resp. $\mu_{X}^{2}$ and $\mu_{Y}^{2}$) are compared in
Figure \ref{F5} right column. We can see that the QSD and the
invariant probability measure have small difference for the single
well potential function $V_{1}$. But they look very different for the
double well potential function $V_{2}$. With the double well potential
function, there is a visible difference between probability
density functions of $\mu^{2}_{X}$ and $\mu^{2}_{Y}$. The density
function of QSD is much smaller than the invariant probability measure
around the left local minimum $x=1-\sqrt{2}$ because this local
minimum is closer to the absorbing set, which makes killing
and regeneration more frequent when $X_{t}$ is near this local
minimum. In other words, the QSD of 
equation \eqref{gradient} with respect to the double well potential is
very sensitive against the change at the boundary.

The reason of the high sensitivity is illustrated by the coupling argument. We first run Algorithm \ref{Alg:coupling}
with 8 independent long trajectories with length of $10^6$ and collect
the coupling times. The slope of exponential tail of the coupling time
gives the rate of contraction of $P^T_Y$. The $\mathbb{P}(\tau^C>t)$
versus $t$ plot is demonstrated in log-linear plot in Figure
\ref{F5} left column. The slope of exponential tail is $\gamma =
2.031414$ for the single well potential $V_{1}$, and $\gamma = 0.027521$
for the double well potential case. Then we run Algorithm
\ref{Alg:W1case1} to estimate the finite time error
$\mathrm{d}_{w}(\mu_{X}P_{X}^{T}, \mu_{X}P_{Y}^{T})$ for both
cases. Since the single well potential case has a much faster coupling
speed, we can choose $T = 0.5$. The output of Algorithm
\ref{Alg:W1case1} is $d_w(\mu_X P^T_X, \mu_X P^T_Y) \approx
0.00391083$. This gives an estimate $d_{w}(\mu_{X}, \mu_{Y}) \approx
0.0061$. The double well potential case converges much slower. We
choose $T = 20$ to make sure that the denominator $1 - e^{-\gamma T}$ is not too small. As
a result, Algorithm \ref{Alg:W1case1} gives an approximation $d_w(\mu_X P^T_X, \mu_X P^T_Y) \approx
0.06402$, which means $d_{w}(\mu_{X}, \mu_{Y}) \approx 0.1512$. This
is consistent with the right column seen in Figure \ref{F5}, the QSD
of the double well potential is much more sensitive against a change of the
boundary condition than the single well potential case.

\begin{figure}[h]
     \centering
     \includegraphics[width=\textwidth]{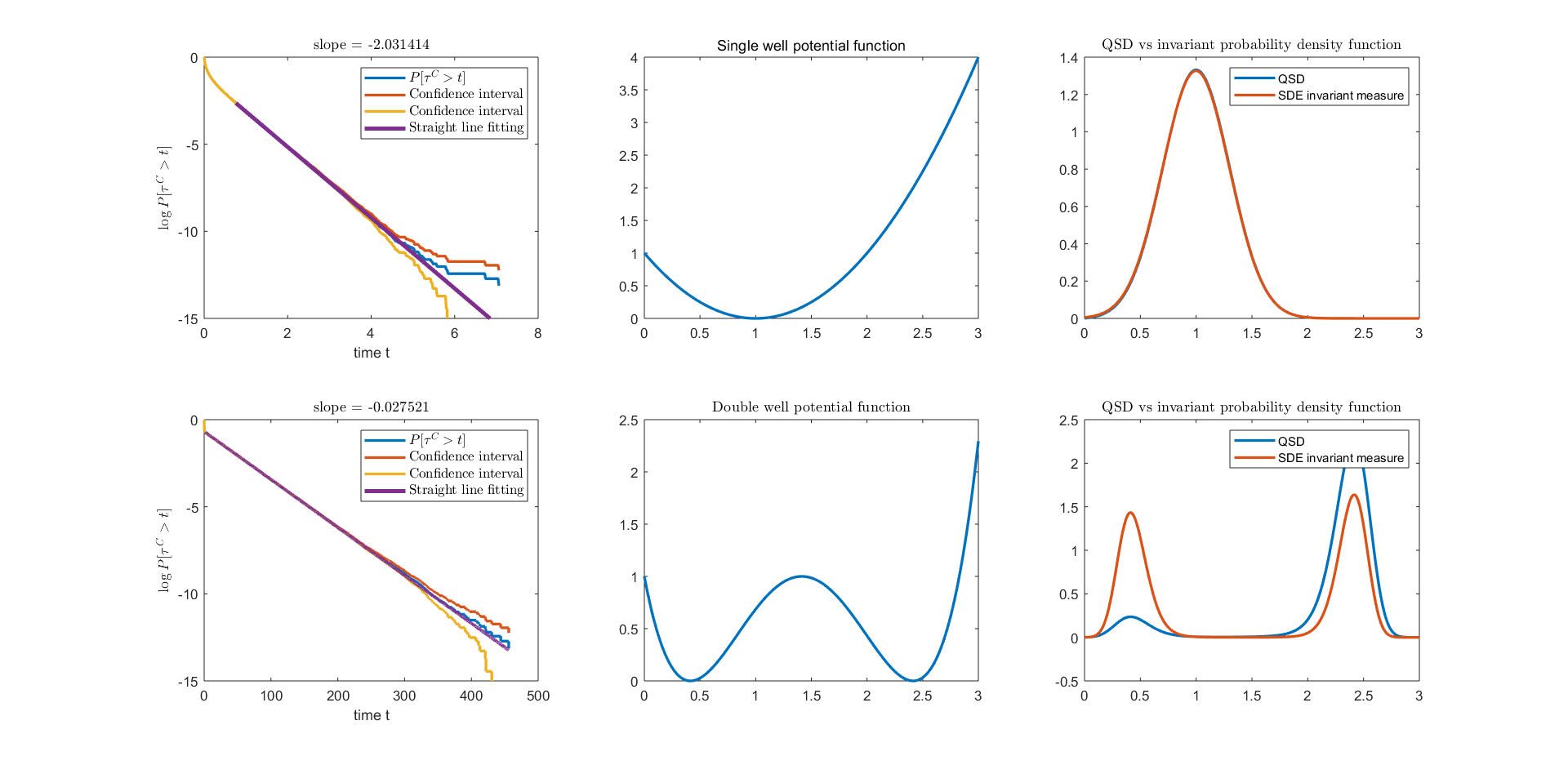}
     \caption{(Single well vs. Double well potential function) $\textbf{Left column}$: $\mathbb{P}(\tau^C>t)$ vs. $t$. $\textbf{Middle column:}$ Single well and double well potential functions. $\textbf{Right column}$: QSD vs. invariant density function. }
     \label{F5}
\end{figure}

\subsection{Lotka-Volterra Competitive Dynamics}
In this example, we focus on the effect of demographic noise on the classical Lotka-Volterra competitive system. The Lotka-Volterra competitive system with some environmental fluctuation has the form
\begin{equation}
    \begin{split}
        dY_1(t) &=Y_1(t)(l_1-a_{11}Y_1(t)-a_{12}Y_2(t))dt+\sigma'_1Y_1(t)dW_1(t),\\
        dY_2(t) &=Y_2(t)(l_2-a_{22}Y_1(t)-a_{21}Y_1(t))dt+\sigma'_2Y_2(t)dW_2(t).
    \end{split}
    \label{LV_1}
\end{equation}
Here $l_i>0$ is the per-capita growth rate of species $i$ and $a_{ij}>0$ is the per-capita competition rate between species $i$ and $j$.
More details can be found in \cite{hening2020stationary}. Model parameters are chosen to
be $l_1=2, l_2=4, a_{11}=0.8, a_{12}=1.6, a_{21}=1, a_{22}=5$. Let
$\partial \mathcal{X}$ be the union of x-axis and y-axis. For
suitable $\sigma'_{1}$ and $\sigma'_{2}$, $Y_{1}$ and $Y_{2}$ can
coexist such that the probability of $(Y_{1}, Y_{2})$ hits $\partial
\mathcal{X}$ in finite time is zero. So equation \eqref{LV_1} admits
an invariant probability measure, denoted by $\mu_{Y}$.

As a modification, we add a small demographic noise term to equation
\eqref{LV_2}. The equation becomes
\begin{equation}
\begin{split}
        dX_1(t)
        &=X_1(t)(l_1-a_{11}X_1(t)-a_{12}X_2(t))dt+\sigma_1X_1(t)dW_1(t)+
        \epsilon \sqrt{X_1(t)}dW'_1(t),\\
        dX_2(t)
        &=X_2(t)(l_2-a_{22}X_1(t)-a_{21}X_1(t))dt+\sigma_2X_2(t)dW_2(t)+\epsilon
        \sqrt{X_2(t)}dW'_2(t).
    \end{split}
    \label{LV_2}
  \end{equation}
It is easy to see that equation \eqref{LV_2} can exit to the boundary
$\partial \mathcal{X}$. It admits a QSD, denoted by $\mu_{X}$. 
  
In order to study the effect of demographic noise, we compare
$\mu_{Y}$, the numerical invariant measure of equation (\ref{LV_1}),
and $\mu_{X}$, the QSD of equation (\ref{LV_2}).  In our simulation, we fix the strength
of demographic noise as $\epsilon=0.05$ and compare $\mu_{X}$ and
$\mu_{Y}$ at two different levels of the environment noise $\sigma_{1}
= \sigma_{2}=0.75$ and $\sigma_{1} = \sigma_{2}=1.1$
respectively. The coefficient $\sigma_{1}'$ and $\sigma_{2}'$ in
equation \eqref{LV_1} satisfies $\sigma_{i}' = \sqrt{\sigma_{i}^{2}  +
\epsilon^{2}}$ for $i = 1, 2$ to match the effect of the additional
demographic noise. Compare Figure \ref{F6} and Figure \ref{F7}, one
can see that $\mu_{Y}$ has significant concentration at the boundary
when $\sigma_{1} = \sigma_{2} = 1.1$.

The result for $\sigma_{1} = \sigma_{2}= 0.75$ is shown in Figure \ref{F6}. Left bottom of
Figure \ref{F6} shows the invariant measure. The QSD is shown on
right top of Figure \ref{F6}. The total variation distance between these two
measures are shown at the bottom of Figure \ref{F6}. The difference is
very small and it just appears around boundary. This is reasonable
because with high probability, the trajectories of equation
\eqref{LV_2} moves far from the absorbing set
$\partial\mathcal{X}$ in both cases, which makes the regeneration
events happen less often. This is consistent with the result of Lemma
\ref{Lem:W1case2}. We compute the distribution of the
coupling time. The coupling time distribution and its exponential tail
are shown in Figure \ref{F6} Top Left. Then we use
Algorithm \ref{Alg:W1case2} to compute the finite time error. To
better match two trajectories given by equations \eqref{LV_1} and
\eqref{LV_2}, we separate the noise term in equation \eqref{LV_1} into
$\sigma_{i}'Y_{i}(t) \mathrm{d}W_{i}(t) = \sigma_{i}Y_{i}(t)
\mathrm{d}W^{(1)}_{i}(t) + \epsilon Y_{i}(t)
\mathrm{d}W^{(2)}_{i}(t)$ for $i = 1,2$, where $W^{(1)}_{i}(t)$ and
$W^{(2)}_{i}(t)$ use the same Wiener process trajectory as $W_{i}(t)$
and $W'_{i}(t)$ in equation \eqref{LV_2} for $i = 1, 2$. Let $T = 4$. The finite time error
caused by the demographic noise is $0.01773$. As a result, the
upper bound given in Lemma \ref{Lem:W1case2} is $0.02835$. Note that
as seen in Figure \ref{F6}, this upper bound actually significantly
overestimates the distance between the invariant probability measure
and the QSD. The empirical total variation distance is much smaller
that our theoretical upper bound. This is because the way of matching
$\sigma'_{i}Y_{i} \mathrm{d}W_{i}(t)$ and $\sigma_{i}X_{i}(t)
\mathrm{d}W_{i}(t) + \epsilon \sqrt{X_{i}(t)} \mathrm{d}W_{i}(t)$ is
very rough. A better approach of matching those noise terms will
likely lead to a more accurate estimation of the upper bound of the
error. 

The results for $\sigma_{1} = \sigma_{2} = 1.1$ are shown in Figure
\ref{F7}. The total variation distance between these two measures are shown at
the bottom of Figure \ref{F7}. It is not hard to see the difference is
significantly larger than case $\sigma=0.75$. The reason is that trajectories of
equation \eqref{LV_2} have high probability moving
along the boundary in this parameter setting. This makes the probability
of falling into the absorbing set $\partial\mathcal{X}$ much
higher. Same as above, we compute the distribution of the
coupling time and demonstrate the coupling time distribution (as well
as the exponential tail) in Figure \ref{F7} Top Left. The coupling in
this example is slower so we choose $T = 12$ to run Algorithm \ref{Alg:W1case2}. The
probability of killing before $T$ is approximately $0.11186$ and the total finite time error
caused by the demographic noise is $0.06230$. As a result, the
upper bound given in Lemma \ref{Lem:W1case2} is $0.1356$. This is
consistent with the numerical finding shown in Figure \ref{F7} Bottom
Right. 

\begin{figure}[h]
 \centering
     \includegraphics[width=\textwidth]{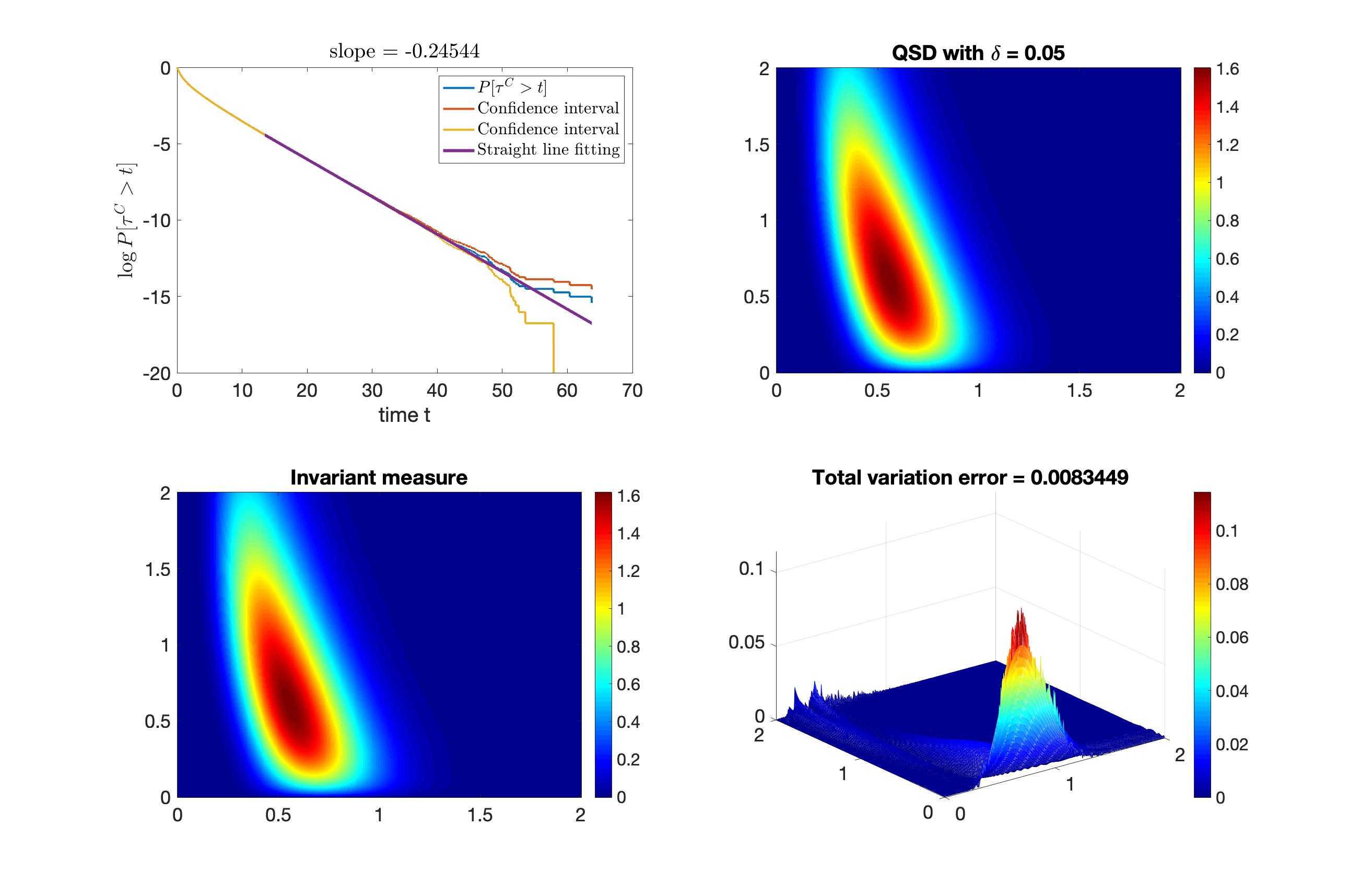}
     \caption{(Case $\sigma_1=\sigma_2=0.75$) $\textbf{Upper panel}$: ($\textbf{Left}$) $\mathbb{P}(\tau^C>t)$ vs.$t$. ($\textbf{Right}$) QSD with demographic noise coefficient $\epsilon=0.05$.\ $\textbf{Lower panel}$: ($\textbf{Left}$) Invariant density function for $\sigma=0.75$. ($\textbf{Right}$) Total variation of QSD and invariant density function.}     
    \label{F6}
 \end{figure}

\begin{figure}[h]
     \centering
     \includegraphics[width=\textwidth]{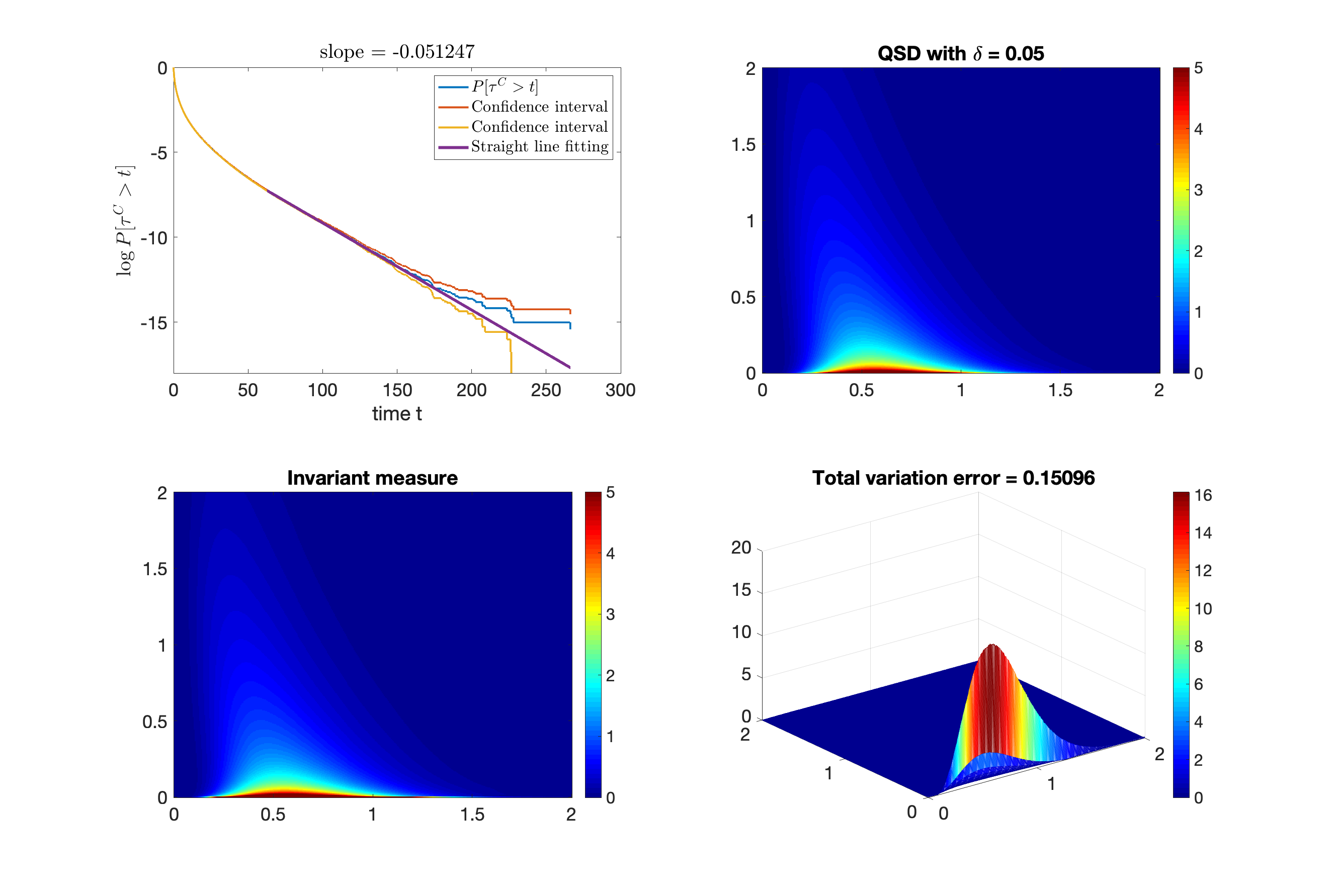}
     \caption{(Case $\sigma_1=\sigma_2=1.1$) $\textbf{Upper panel}$: ($\textbf{Left}$) $\mathbb{P}(\tau^C>t)$ vs.$t$. ($\textbf{Right}$) QSD with demographic noise coefficient $\epsilon=0.05$.\ $\textbf{Lower panel}$: ($\textbf{Left}$) Invariant density function for $\sigma=1.1$. ($\textbf{Right}$) Total variation of QSD and invariant density function.}
     \label{F7}
\end{figure}

\subsection{Chaotic attractor}
In this subsection, we consider a non-trivial 3D example that has
interactions between chaos and random perturbations, called the
Rossler oscillator. The random perturbation of the Rossler oscillator is
\begin{equation}
  \label{Rossler}
\begin{cases}
dx=(-y-z)dt+\epsilon dW^x_t\\
dy=(x+ay)dt+\epsilon dW^y_t\\
dz=(b+z(x-c))dt+\epsilon dW^z_t,
\end{cases}
\end{equation}
where $a=0.2, b=0.2, c=5.7$, and $W^x_t, W^y_t$ and $W^z_t$ are
independent Wiener processes. The strength of noise is chosen to be
$\epsilon=0.1$. This system is a representative example of chaotic ODE
systems appearing in many applications of physics, biology and
engineering. We consider equation \eqref{Rossler} restricted to the
box $D = [-15, 15]\times[-15, 15]\times[-1.5, 1.5]$. Therefore, it
admits a QSD supported by $D$. In this example, a grid with $1024\times
1024\times 128$ mesh points is constructed on $D$.

It is very difficult to use traditional PDE solver to compute a large
scale 3D problem. To analyze the QSD of this chaotic system, we apply
the blocked version of the Fokker-Planck solver
studied in \cite{dobson2019efficient}. More precisely, a big mesh is divided into many
``blocks''. Then we solve the optimization problem \eqref{opt} in
parallel. The collaged solution is then processed by the ``shifting
block'' technique to reduce the interface error, which means the
blocks are reconstructed such that the center of new blocks cover the
boundary of old blocks. Then we let the solution from the first found
serve as the reference data, and solve optimization problem
\eqref{opt} again based on new blocks. See \cite{dobson2019efficient} for the full
details of implementation. In this example, the grid is further divided into
$32\times32\times4$ blocks. We run the ``shifting block'' solver for 3
repeats to eliminate the interface error. The reference solution is generated by a
Monte Carlo simulation with $10^9$ samples. The killing rate is
$\lambda = -0.473011$. Two ``slices'' of the
solution, as seen in Figure \ref{F8}, are then projected to the
xy-plane for the sake of easier demonstration. See the caption of
Figure \ref{F8} for the coordinates of these two ``slices''. The left picture in Figure \ref{F8} shows the projection of the
solution has both dense and sparse parts that are clearly divided. An
outer "ring" with high density appears and the density decays quickly
outside this "ring." The right picture in Figure \ref{F8} demonstrates
the solution has much lower density when $z$-coordinate is larger than
1.    

\begin{figure}[ht!]
     \centering
     \includegraphics[width=\textwidth]{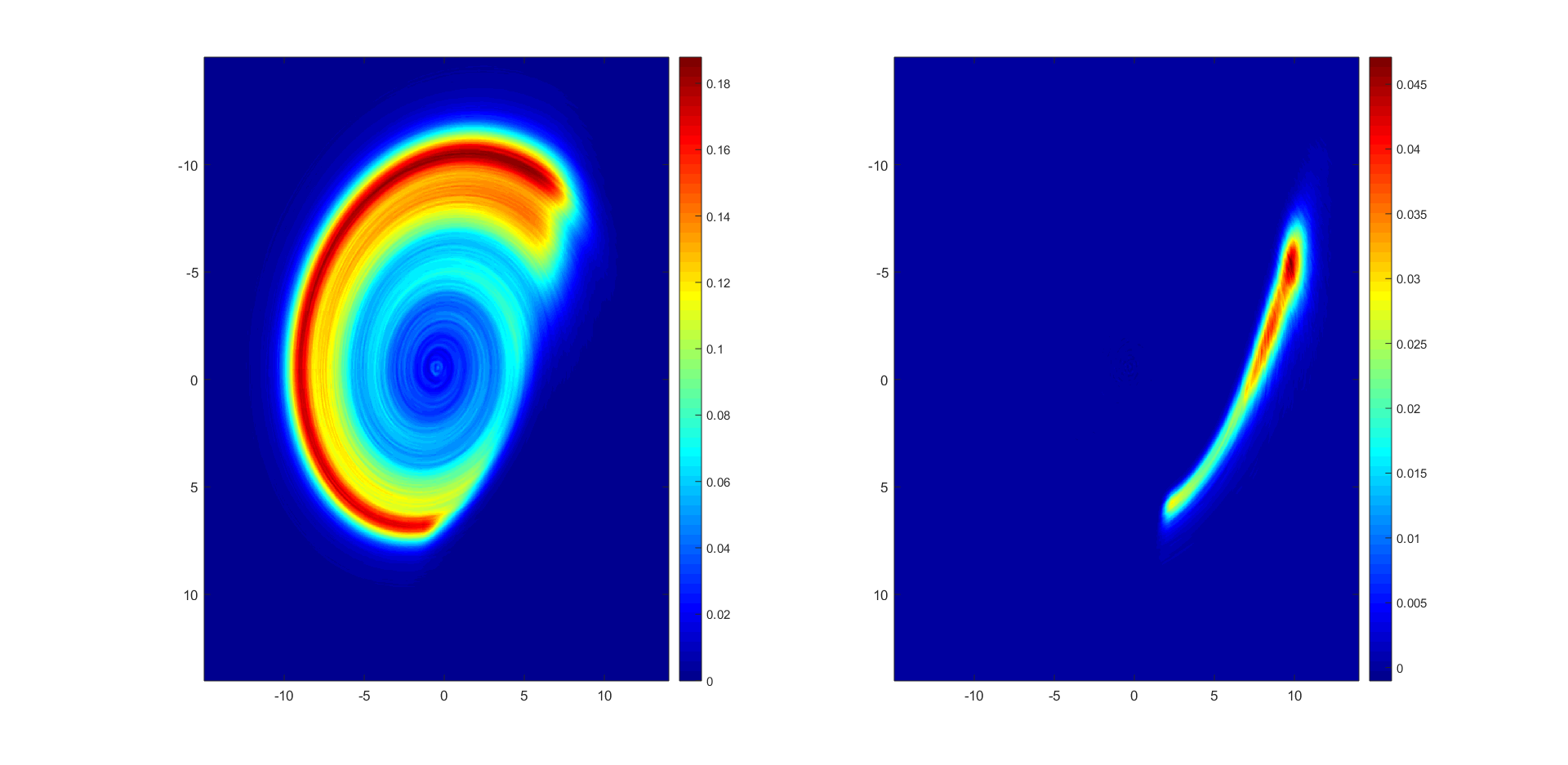}
     \caption{(Rossler) Projections of 2 "slices" of the QSD of the Rossler system to the xy-plane. z-coordinates of 2 slices are [-0.0352, 0.2578], [1.1367, 1.4297]. The solution is obtained by a balf-block shift solver on $[-15,15]\times[-15,15]\times[-1.5, 1.5]$ with $1024\times1024\times 128$ mesh points, $32\times32\times4$ blocks, and $10^9$ samples.}
     \label{F8}
\end{figure}

\section{Conclusion} In this paper we provide some data-driven
methods for the computation of quasi-stationary distributions (QSDs) and
the sensitivity analysis of QSDs. Both of them are extended from the
first author's earlier work about invariant probability measures. When
using the Fokker-Planck equation to solve the QSD, we
find that the idea of using a reference solution with low accuracy to
set up an optimization problem still works well for QSDs. And the QSD
is not very sensitively dependent on the killing rate, which is given
by the Monte Carlo simulation when producing the reference
solution. The data-driven Fokker-Planck solver studied in this paper
is still based on discretization. But we expect the mesh-free
Fokker-Planck solver proposed in \cite{dobson2019efficient} to work for
solving QSDs. In the sensitivity analysis part, the focus is on the
relation between a QSD and the invariant probability measure of a
``modified process'', because many interesting problems in
applications fall into this category. The sensitivity
analysis needs both a finite time truncation error and a contraction
rate of the Markov transition kernel. The approach of estimating the finite time truncation error
is standard. The contraction rate is estimated by using the novel numerical
coupling approach developed in \cite{li2020numerical}. The sensitivity analysis
of QSDs can be extended to other settings, such as the sensitivity
against small perturbation of parameters, or the sensitivity of a
chemical reaction process against its diffusion approximation. We will
continue to study sensitivity analysis related to QSDs in our subsequent work.

\newpage
\bibliographystyle{plain}
\bibliography{ref.bib}

\begin{thebibliography}{10}

\bibitem{agresti1998approximate}
Alan Agresti and Brent~A Coull.
\newblock Approximate is better than “exact” for interval estimation of
  binomial proportions.
\newblock {\em The American Statistician}, 52(2):119--126, 1998.

\bibitem{anderson2012continuous}
William~J Anderson.
\newblock {\em Continuous-time Markov chains: An applications-oriented
  approach}.
\newblock Springer Science \& Business Media, 2012.

\bibitem{barton1993uniform}
Russell~R Barton and Lee~W Schruben.
\newblock Uniform and bootstrap resampling of empirical distributions.
\newblock In {\em Proceedings of the 25th conference on Winter simulation},
  pages 503--508, 1993.

\bibitem{benaim2018stochastic}
Michel Benaim, Bertrand Cloez, Fabien Panloup, et~al.
\newblock Stochastic approximation of quasi-stationary distributions on compact
  spaces and applications.
\newblock {\em Annals of Applied Probability}, 28(4):2370--2416, 2018.

\bibitem{collet2012quasi}
Pierre Collet, Servet Mart{\'\i}nez, and Jaime San~Mart{\'\i}n.
\newblock {\em Quasi-stationary distributions: Markov chains, diffusions and
  dynamical systems}.
\newblock Springer Science \& Business Media, 2012.

\bibitem{darroch1965quasi}
John~N Darroch and Eugene Seneta.
\newblock On quasi-stationary distributions in absorbing discrete-time finite
  markov chains.
\newblock {\em Journal of Applied Probability}, 2(1):88--100, 1965.

\bibitem{dobson2019efficient}
Matthew Dobson, Yao Li, and Jiayu Zhai.
\newblock An efficient data-driven solver for fokker-planck equations:
  algorithm and analysis.
\newblock {\em arXiv preprint arXiv:1906.02600}, 2019.

\bibitem{dobson2021using}
Matthew Dobson, Yao Li, and Jiayu Zhai.
\newblock Using coupling methods to estimate sample quality of stochastic
  differential equations.
\newblock {\em SIAM/ASA Journal on Uncertainty Quantification}, 9(1):135--162,
  2021.

\bibitem{ferrari1992existence}
Pablo~A Ferrari, Servet Mart{\'\i}nez, and Pierre Picco.
\newblock Existence of non-trivial quasi-stationary distributions in the
  birth-death chain.
\newblock {\em Advances in applied probability}, pages 795--813, 1992.

\bibitem{hening2020stationary}
Alexandru Hening and Yao Li.
\newblock Stationary distributions of persistent ecological systems.
\newblock {\em arXiv preprint arXiv:2003.04398}, 2020.

\bibitem{huillet2007wright}
Thierry Huillet.
\newblock On wright--fisher diffusion and its relatives.
\newblock {\em Journal of Statistical Mechanics: Theory and Experiment},
  2007(11):P11006, 2007.

\bibitem{johndrow2017error}
James~E Johndrow and Jonathan~C Mattingly.
\newblock Error bounds for approximations of markov chains used in bayesian
  sampling.
\newblock {\em arXiv preprint arXiv:1711.05382}, 2017.

\bibitem{karatzas2014brownian}
Ioannis Karatzas and Steven Shreve.
\newblock {\em Brownian motion and stochastic calculus}, volume 113.
\newblock springer, 2014.

\bibitem{kloeden1992stochastic}
Peter~E Kloeden and Eckhard Platen.
\newblock Stochastic differential equations.
\newblock In {\em Numerical Solution of Stochastic Differential Equations},
  pages 103--160. Springer, 1992.

\bibitem{lai2011transient}
Ying-Cheng Lai and Tam{\'a}s T{\'e}l.
\newblock {\em Transient chaos: complex dynamics on finite time scales}, volume
  173.
\newblock Springer Science \& Business Media, 2011.

\bibitem{li2018data}
Yao Li.
\newblock A data-driven method for the steady state of randomly perturbed
  dynamics.
\newblock {\em arXiv preprint arXiv:1805.04099}, 2018.

\bibitem{li2020numerical}
Yao Li and Shirou Wang.
\newblock Numerical computations of geometric ergodicity for stochastic
  dynamics.
\newblock {\em Nonlinearity}, 33(12):6935, 2020.

\bibitem{oksendal2003stochastic}
Bernt {\O}ksendal.
\newblock Stochastic differential equations.
\newblock In {\em Stochastic differential equations}, pages 65--84. Springer,
  2003.

\bibitem{robert2013monte}
Christian Robert and George Casella.
\newblock {\em Monte Carlo statistical methods}.
\newblock Springer Science \& Business Media, 2013.

\bibitem{van1991quasi}
Erik~A Van~Doorn.
\newblock Quasi-stationary distributions and convergence to quasi-stationarity
  of birth-death processes.
\newblock {\em Advances in Applied Probability}, pages 683--700, 1991.

\bibitem{van1995geomatric}
Erik~A van Doorn and Pauline Schrijner.
\newblock Geomatric ergodicity and quasi-stationarity in discrete-time
  birth-death processes.
\newblock {\em The ANZIAM Journal}, 37(2):121--144, 1995.

\end{thebibliography}

\end{document}